\documentclass[10pt,reqno]{amsart}

\usepackage[utf8]{inputenc}
\usepackage{amsfonts}
\usepackage{amsmath}
\usepackage{mathrsfs}
\usepackage{amsthm}
\usepackage{amssymb}
\usepackage[top=30truemm,bottom=30truemm,left=30truemm,right=30truemm]{geometry}
\usepackage[bookmarks=false,draft=false,breaklinks,colorlinks]{hyperref}
\usepackage[dvipdfmx]{graphicx}
\usepackage[all]{xy}

\usepackage{color}
\usepackage{url}

\usepackage{comment}

\usepackage{fancyhdr}
\pagestyle{headings}

\usepackage{enumitem}
\setenumerate{label=(\arabic*),nosep}
\setitemize{nosep}
\newlist{clist}{enumerate}{1}
\setlist*[clist]{label=(\roman*),nosep}

\makeatletter
\let\@fnsymbol\@arabic
\makeatother

\theoremstyle{definition}
\newtheorem{Def}{Definition}[section]
\newtheorem{Rem}[Def]{Remark}

\theoremstyle{plain}
\newtheorem{Thm}[Def]{Theorem}
\newtheorem{Prop}[Def]{Proposition}
\newtheorem{Lem}[Def]{Lemma}
\newtheorem{Cor}[Def]{Corollary}

\newcommand{\quotient}[2]{
\mathchoice{  \text{\raise1ex\hbox{$#1$}\!\Big/\!\lower1ex\hbox{$#2$}} }
                  {  \text{\raise1pt\hbox{$#1$}\big/\lower1pt\hbox{$#2$}} }
                  {  {#1}\,/\,{#2}  }
                  {  {#1}\,/\,{#2}  }
}

\title{A note on free divergence-free vector fields}
\author{Hyuga Ito}
\author{Akihiro Miyagawa}
 
\address{
Graduate School of Mathematics, Nagoya University, Furocho, Chikusaku, Nagoya, 464-8602, Japan
}
\email{hyuga.ito.e6@math.nagoya-u.ac.jp}
\address{Department of Mathematics, Kyoto University, Kitashirakawa Oiwake-cho, Sakyo-ku, 606-8502, Japan}
\email{miyagawa.akihiro.43v@st.kyoto-u.ac.jp}

\date{\today}

\begin{document}

\maketitle

\begin{abstract}
 We exhibit an orthonormal basis of cyclic gradients and a (non-orthogonal) basis of the homogeneous free divergence-free vector field on the full Fock space and determine the dimension of Voiculescu's free divergence-free vector field of degree $k$ or less. Moreover, we also give a concrete formula for the orthogonal projection onto the space of cyclic gradients as well as the free Leray projection. 
\end{abstract}

\allowdisplaybreaks{
\section{Introduction}
In the 1980s, Voiculescu introduced free probability theory to address the free group factor isomorphism problem (see \cite{v85-0}). Within this theoretical framework, the concept of the \textit{free semi-circular system} emerges, defined as a tuple of freely independent \textit{semi-circular distributions} given by $\frac{1}{2\pi}\sqrt{4-t^2} \ 1_{[-2,2]}\ dt$ (with the Lebesgue measure $dt$). The free semi-circular system plays a role analogous to independent Gaussian distributions, as demonstrated in the free analogues of the central limit theorem, Wick's theorem, and the Stein equation. 

In the 1990s, Voiculescu also introduced free probabilistic analogues of entropy and Fisher's information measure, naming them free entropy and free Fisher's information measure, respectively (see a survey article \cite{v02a-0}). In particular, Voiculescu \cite{v98-0} introduced the so-called \textit{non-microstate} free entropy. In this approach, Voiculescu introduced a certain non-commutative differential operator, which is called the \textit{free difference quotient} and plays the role of a non-commutative counterpart of Hilbert transform, in order to define the free Fisher's information measure. 

Then, the study of the \textit{cyclic derivative} associated with the free difference quotient naturally emerged in relation to free entropy (see \cite{v00,v02}). 
In the work \cite{v00}, Voiculescu determined the range of the cyclic gradient associated with the free difference quotient and established a certain exact sequence, which Mai and Speicher \cite{ms21} and the first-named author \cite{i23} revisited in more general contexts.
In the work \cite{v02}, Voiculescu studied more geometric aspects of cyclic gradients associated with the free difference quotient. In particular, he introduced the notion of the free divergence-free vector field (originally called $\tau$-preserving non-commutative vector fields) which is a free probabilistic analogue of the divergence-free vector field, and he showed that, for a vector in the free divergence-free vector field, the associated derivative exponentiates a one-parameter automorphism of a free group factor.

This work was motivated by Voiculescu's work \cite{v19}. The paper \cite{v19} gave a free probabilistic analogue of the Euler equation (called free Euler equation) of ideal incompressible fluids based on the technologies developed in \cite{v00,v02} with replacing Euclidean space $\mathbb{R}^n$ with a free semi-circular system $s_{1},s_{2},\dots,s_{n}$ on the full Fock space, following the method of \cite{ak98}. Recently, Jekel-Li-Shlyakhtenko \cite{jls22} extended Voiculescu's framework to tracial non-commutative smooth functions, and they connect a solution of the free Euler equation with a geodesic in the free Wasserstein manifold. In our quest for examples of (non-stationary) solutions of the free Euler equation, we realized that it is difficult to analyze the free probabilistic analogue of Leray projection (called \textit{free Leray projection}), which is an ingredient of free Euler equation and the orthogonal projection from the non-commutative $L^2$-space generated by a free semi-circular system onto the free divergence-free vector field. Hence, we tried to understand the structure of the free divergence-free vector field. In \cite{v02}, Voiculescu found a basis of the free divergence-free vector field. However, we observed that this basis does not span the whole free divergence-free vector field.

The purpose of this note is to slightly modify his argument to compute the dimension of the free divergence-free vector field and deduce an exact formula for the free Leray projection. To show this, we focus on the space of cyclic gradients whose dimension can be computed by a group action of cyclic groups on words of finite length. We hope that our results will be used to find concrete solutions to the free Euler equation in future work.    

\section*{Acknowledgment}
This study started with a question from Prof.~{Voiculescu} in his intensive KTGU lectures on the free Euler equation at Kyoto University in January 2023. He also kindly hosted the visit of the second-named author to UC Berkeley in March 2023. The authors would like to thank him for his attractive lectures and for encouraging us to write this note. 
The first-named author would like to thank his supervisor, Prof. Yoshimichi Ueda for his continuous support and encouragement during his master course.
The second-named author would like to thank his supervisor, Prof.~{Benoit Collins} for his continuous support during his PhD study. 

A. Miyagawa was supported by JSPS Research Fellowships for Young Scientists, JSPS KAKENHI Grant Number JP 22J12186, JP 22KJ1817.
\section{Preliminaries}
In this section, we recall some basic notations and some facts from \cite{v02,v19}. The full Fock space $\mathcal{F}(\mathbb{C}^n)$ over $\mathbb{C}^n$ is the Hilbert space as follows.
\[
\mathcal{F}(\mathbb{C}^n)=\mathbb{C}1\oplus\bigoplus_{k\geq1}(\mathbb{C}^n)^{\otimes k},
\]
where $1$ is the vacuum vector. Throughout this note, we fix an orthonormal basis $\{e_{1},\dots,e_{n}\}$ of $\mathbb{C}^n$, and $\{f_1,\ldots,f_n\}$ denotes the standard basis of $\mathbb{C}^n$, i.e., the $i$-th component of $f_i$ is $1$ and other components are $0$. 

For any $n\in \mathbb{N}$, we set $[n]=\{1,2,\dots,n\}$. 
We denote by $[n]^*$ the free monoid with the identity $\epsilon$ and $n$-generators $1,\ldots,n$, that is, $[n]^*=\{\epsilon\}\cup \{i_{1}i_{2}\cdots i_{k}\,|\,k\in\mathbb{N},\,i_{j}\in[n],\,1\leq j\leq k\}$. For any word $w=i_{1}\cdots i_{k}\in[n]^*$, we define the length of $w$ by $k$ (the length of $\epsilon$ is defined by $0$), and $[n]^k$ denotes the subset of $[n]^*$ which consists of all elements whose lengths are $k$. For any $w\in[n]^*$ and $k\in\mathbb{N}$, $w^k$ denotes the $k$-product of $w$, that is, $w\cdots w$. In addition, for the identity $\epsilon$ and $i_{1}\cdots i_{k}\in[n]^*$, let $e_{\epsilon}$ and $e_{i_{1}\cdots i_{k}}$ denote the vacuum vector $1$ and $e_{i_{1}}\otimes e_{i_{2}}\otimes\cdots\otimes e_{i_{k}}$, respectively.

Let $l_{j}$ and $r_{j}$ denote the left and right creation operator with respect to $e_{j}$, respectively, for each $j=1,\dots,n$. Namely, for each $w \in[n]^*$, $l_je_w$ and $r_je_w$ are given by
\begin{align*}
l_j e_w &= e_{jw},\\
r_j e_w &= e_{wj}. 
\end{align*}
Set $s_{j}=l_{j}+l_{j}^*$ for each $j=1,\dots,n$. Then, $\{s_{1},\dots,s_{n}\}$ becomes a free semi-circular system with respect to the vacuum state $\tau(\cdot):=\langle\ \cdot \ 1,1\rangle$. Let $\mathbb{C}\langle s_{1},\dots,s_{n}\rangle=\mathbb{C}^s_{\langle n\rangle}$ denote the unital (algebraic) $*$-subalgebra of $B(\mathcal{F}(\mathbb{C}^n))$ generated by $\{1\}\cup\{s_{1},\dots,s_{n}\}$ and by $M$ the von Neumann subalgebra of $B(\mathcal{F}(\mathbb{C}^n))$ generated by $\mathbb{C}^s_{\langle n\rangle}$. 

We then work in the non-commutative $L^2$-space $L^2(M,\tau)$. We have 
\begin{equation*}
    l^{k_{1}}_{i_{1}}\cdots l^{k_{p}}_{i_{p}}1=e_{i_{1}^{k_{1}}i_{2}^{k_{2}}\cdots i_{p}^{k_{p}}}=U_{k_{1}}(s_{i_{1}})\cdots U_{k_{p}}(s_{i_{p}})1,
\end{equation*}
and hence obtain the following unitary isomorphism:
\begin{equation*}
    U:L^2(M,\tau)\ni U_{k_{1}}(s_{i_{1}})\cdots U_{k_{p}}(s_{i_{p}})\mapsto e_{i_{1}^{k_{1}}i_{2}^{k_{2}}\cdots i_{p}^{k_{p}}}\in \mathcal{F}(\mathbb{C}^n)
\end{equation*}
for any $i_{1},\dots,i_{p}\in[n]$ such that $i_{j}\not=i_{j+1}$ and any $k_{1},\dots,k_{p}\in\mathbb{N}\setminus\{0\}$, where the $U_{k}(t)$ are the Chebyshev polynomials (of degree $k$) of the second kind, which are orthogonal to each other with respect to the semi-circular distribution (\cite[section 1.4]{v02}).

Let $\mathbb{C}\langle l_{1},\dots,l_{n}\rangle=\mathbb{C}^l_{\langle n\rangle}$ denote the unital subalgebra of $B(\mathcal{F}(\mathbb{C}^n))$ generated by $\{1\}\cup\{l_{1},\dots,l_{n}\}$. Now, we have two cyclic gradients $\delta^s=(\delta^s_{j})_{j=1}^{n}$ and $\delta^l=(\delta^l_{j})_{j=1}^{n}$ with respect to $s_{1},\dots,s_{n}$ and with respect to $l_{1},\dots,l_{n}$ defined by
\begin{align*}
\delta^s (s_{i_1}s_{i_2}\cdots s_{i_p})&=\sum_{j=1}^p s_{i_{j+1}}\cdots s_{i_{p}}s_{i_1}s_{i_2}\cdots s_{i_{j-1}} \otimes f_{i_j} \in (\mathbb{C}^s_{\langle n\rangle})^n, \\
\delta^l (l_{i_1}l_{i_2}\cdots l_{i_p})&=\sum_{j=1}^p l_{i_{j+1}}\cdots l_{i_{p}}l_{i_1}l_{i_2}\cdots l_{i_{j-1}} \otimes f_{i_j} \in (\mathbb{C}^l_{\langle n\rangle})^n,
\end{align*}
where $i_1,\ldots,i_p \in [n]$ and we identify $(\mathbb{C}^s_{\langle n\rangle})^n \simeq \mathbb{C}^s_{\langle n\rangle} \otimes \mathbb{C}^n$ and $(\mathbb{C}^l_{\langle n\rangle})^n \simeq \mathbb{C}^l_{\langle n\rangle} \otimes \mathbb{C}^n$.
In general, $\delta^l$ is different from $\delta^s$ as an operator, but we have the following fact:
\begin{Thm}(\cite[Theorem 7.4]{v02})\label{range}
    We have $(\delta^s\mathbb{C}^s_{\langle n\rangle})[1\oplus\cdots\oplus1]
        =(\delta^l\mathbb{C}^l_{\langle n\rangle})[1\oplus\cdots\oplus1]$ in $\mathcal{F}(\mathbb{C}^n)^n$.
\end{Thm}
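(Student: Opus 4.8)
The plan is to compare the two ranges through a single unitriangular change of basis. Write $\mathcal{R}^s:=(\delta^s\mathbb{C}^s_{\langle n\rangle})[1\oplus\cdots\oplus1]$ and $\mathcal{R}^l:=(\delta^l\mathbb{C}^l_{\langle n\rangle})[1\oplus\cdots\oplus1]$ inside $\mathcal{F}(\mathbb{C}^n)^n\simeq\mathcal{F}(\mathbb{C}^n)\otimes\mathbb{C}^n$, let $\mathcal{F}_{\mathrm{alg}}$ be the algebraic span of $\{e_w:w\in[n]^*\}$, write $s_u:=s_{u_1}\cdots s_{u_q}$ and $l_u:=l_{u_1}\cdots l_{u_q}$ for a word $u=u_1\cdots u_q$, and define the linear map $\Phi:\mathcal{F}_{\mathrm{alg}}\to\mathcal{F}_{\mathrm{alg}}$ by $\Phi(e_w):=s_w 1$. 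Since $s_j=l_j+l_j^*$ and $l_j^*$ strictly lowers the degree grading of $\mathcal{F}(\mathbb{C}^n)$, the map $\Phi$ is unitriangular with respect to that grading, hence a linear automorphism of $\mathcal{F}_{\mathrm{alg}}$ preserving each filtration piece $\mathcal{F}_{\le k}:=\mathbb{C}1\oplus\bigoplus_{1\le j\le k}(\mathbb{C}^n)^{\otimes j}$. Comparing the defining formulas for $\delta^s$ and $\delta^l$ termwise, for $w=i_1\cdots i_p$ and $v_j:=i_{j+1}\cdots i_p i_1\cdots i_{j-1}$ the $m$-th component of $\delta^s(s_{i_1}\cdots s_{i_p})[1\oplus\cdots\oplus1]$ equals $\sum_{j:\,i_j=m}s_{v_j}1=\Phi\big(\sum_{j:\,i_j=m}e_{v_j}\big)$, while that of $\delta^l(l_{i_1}\cdots l_{i_p})[1\oplus\cdots\oplus1]$ equals $\sum_{j:\,i_j=m}e_{v_j}$. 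Hence $\mathcal{R}^s=(\Phi\otimes\mathrm{id}_{\mathbb{C}^n})(\mathcal{R}^l)$, so the theorem is \emph{equivalent} to the $(\Phi\otimes\mathrm{id})$-invariance of $\mathcal{R}^l$.

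Two soft observations reduce matters further. First, $\mathcal{R}^l$ is a \emph{graded} subspace: $\delta^l(l_{i_1}\cdots l_{i_p})[1\oplus\cdots\oplus1]=\sum_{j=1}^p e_{v_j}\otimes f_{i_j}$ is homogeneous of degree $p-1$, so $\mathcal{R}^l=\bigoplus_{k\ge0}\mathcal{R}^l_k$ with each $\mathcal{R}^l_k$ finite-dimensional; indeed, under the identification $\xi\otimes f_m\mapsto\xi\otimes e_m$ (appending $m$ on the right) $\mathcal{R}^l$ becomes the cyclic-shift-invariant subspace $\bigoplus_{p\ge1}\big((\mathbb{C}^n)^{\otimes p}\big)^{\mathbb{Z}/p\mathbb{Z}}$, since $\delta^l(l_w)[1\oplus\cdots\oplus1]$ turns into the sum of the $p$ cyclic rotations of $e_w$, and this is the picture that later yields the dimension formula by counting necklaces. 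Second, since $\Phi\otimes\mathrm{id}$ is injective and preserves each $(\mathcal{F}_{\le k})^n$, it suffices to prove the single inclusion $(\Phi\otimes\mathrm{id})(\mathcal{R}^l)\subseteq\mathcal{R}^l$: intersecting with $(\mathcal{F}_{\le k})^n$ and using that an injective endomorphism of the finite-dimensional space $\mathcal{R}^l\cap(\mathcal{F}_{\le k})^n$ is surjective upgrades this inclusion to an equality level by level, hence on all of $\mathcal{F}(\mathbb{C}^n)^n$. Equivalently, it suffices to show $\delta^s(s_{i_1}\cdots s_{i_p})[1\oplus\cdots\oplus1]\in\mathcal{R}^l$ for every word.

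I would prove this last statement by induction on $p$, the cases $p\le2$ being immediate from the formulas. For the inductive step, expand each $s_{v_j}1$ via the recursion $s_{av}1=l_a(s_v1)+l_a^*(s_v1)$ obtained by peeling off the first letter $a$ of $av$ (alternatively, pass to the Chebyshev basis $U_{k_1}(s_{j_1})\cdots U_{k_m}(s_{j_m})$ with $j_r\ne j_{r+1}$ and use the known expansion of the free difference quotient of a Chebyshev polynomial together with the relation between the cyclic gradient and the free difference quotient). Either route produces a decomposition $\delta^s(s_{i_1}\cdots s_{i_p})[1\oplus\cdots\oplus1]=\delta^l(l_{i_1}\cdots l_{i_p})[1\oplus\cdots\oplus1]+\rho_w$ in which the first summand is the homogeneous leading term (and lies in $\mathcal{R}^l$ by definition), while the strictly-lower-degree remainder $\rho_w$ is a linear combination, with coefficients given by semicircular moments $\tau(s_{j_1}\cdots s_{j_r})$, of vectors $\delta^s(s_{w''})[1\oplus\cdots\oplus1]$ indexed by words $w''$ strictly shorter than $w$ (each $w''$ arising by contracting a nonempty non-crossing family of equal-letter pairs out of a cyclic rotation of $w$). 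The inductive hypothesis puts every such $\delta^s(s_{w''})[1\oplus\cdots\oplus1]$ in $\mathcal{R}^l$, hence so is $\delta^s(s_{i_1}\cdots s_{i_p})[1\oplus\cdots\oplus1]$.

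The crux — and the one place where real work is needed — is the bookkeeping in this last step: one must check that the lower-order contributions coming from the various $s_{v_j}1$ (equivalently, from collapsing repeated Chebyshev blocks $U_c(s_j)U_d(s_j)=\sum_e U_e(s_j)$) re-assemble, with \emph{consistent} coefficients across the $n$ components labelled by $f_1,\dots,f_n$, into genuine cyclic-gradient images $\delta^s(s_{w''})[1\oplus\cdots\oplus1]$ of shorter words. In the orbit-sum picture of the second paragraph this is precisely an identity expressing $(\Phi-\mathrm{id})$ applied to a cyclic-orbit-sum vector of length $p$ as a $\tau$-weighted combination of cyclic-orbit-sum vectors of lengths $<p$; granting that identity, the induction closes and, together with the two soft observations, the theorem follows.
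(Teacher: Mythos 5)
Your reductions are correct and in themselves worthwhile: $\Phi(e_w)=s_w1$ is unitriangular for the degree filtration, each generator satisfies $\delta^s(s_w)[1\oplus\cdots\oplus1]=(\Phi\otimes\mathrm{id})\,\delta^l(l_w)[1\oplus\cdots\oplus1]$, so $\mathcal{R}^s=(\Phi\otimes\mathrm{id})(\mathcal{R}^l)$; $\mathcal{R}^l$ is graded (and, after appending the component index, is exactly the span of cyclic orbit sums, i.e.\ the $R$-fixed vectors in each $(\mathbb{C}^n)^{\otimes p}$); and the injectivity-plus-filtration argument correctly reduces the equality to the single inclusion $\delta^s(s_w)[1\oplus\cdots\oplus1]\in\mathcal{R}^l$ for every word $w$. (For comparison: the paper does not prove this statement at all, it imports it from Voiculescu's \emph{Cyclomorphy}, Theorem 7.4, so your argument has to stand on its own.) The problem is that after these soft steps the entire content of the theorem sits in the step you explicitly defer: that the lower-degree remainder $\rho_w=\delta^s(s_w)[1\oplus\cdots\oplus1]-\delta^l(l_w)[1\oplus\cdots\oplus1]$ lies in $\mathcal{R}^l$, equivalently that every homogeneous component of $\delta^s(s_w)[1\oplus\cdots\oplus1]$ is cyclically invariant in the orbit-sum picture. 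Writing ``granting that identity, the induction closes'' leaves the one nontrivial assertion unproved; what remains is a restatement of the theorem, not a proof of it.

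Moreover, the concrete form you conjecture for $\rho_w$ is false as stated. Take $n=2$ and $w=11122$, i.e.\ $s_w=s_1^3s_2^2$. A direct computation gives
\begin{equation*}
\rho_w=\bigl(3e_{11}+2e_{22}+3\cdot 1,\; 2e_{21}+2e_{12}\bigr),
\end{equation*}
and since the vectors $\delta^s(s_u)[1\oplus 1]$, one for each cyclic class of each length, are linearly independent (their top-degree parts $\delta^l(l_u)[1\oplus1]$ are linearly independent within each degree, and a triangularity argument handles different lengths), the expansion of $\rho_w$ in shorter cyclic gradients is unique, namely
\begin{equation*}
\rho_w=\delta^s(s_1^3)[1\oplus1]+2\,\delta^s(s_1s_2^2)[1\oplus1]-2\,\delta^s(s_1)[1\oplus1].
\end{equation*}
The coefficient $-2$ cannot arise from your description ``a linear combination, with coefficients given by semicircular moments, of vectors $\delta^s(s_{w''})[1\oplus\cdots\oplus1]$ obtained by contracting equal-letter pairs,'' since moments are nonnegative counts of non-crossing pairings. (In the orbit-sum/$\delta^l$ picture the coefficients in this example \emph{are} nonnegative, which suggests the right formulation of the missing identity, but that version is precisely the cyclic-invariance statement above and is exactly what still has to be proved, e.g.\ via the Wick-type expansion of $s_{j_1}\cdots s_{j_r}1$ over non-crossing partial pairings and a rotation-by-rotation bookkeeping, or by an orthogonality argument against the vectors $((l_j^*-r_j^*)\xi)_{j=1}^n$.) So the proposal is an honest reduction plus a gap at its only essential step, and the sketched route to closing that gap asserts an identity that is not true in the form given.
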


Following \cite{v02,v19}, we write $\mathrm{Vect}(\mathbb{C}^s_{\langle n\rangle})=(\mathbb{C}^s_{\langle n\rangle})^n$. The next object is the main target of this note.
\begin{Def}(\cite[subsection 3.5]{v02},\cite[section 2]{v19})
    The \textit{free divergence-free vector field} (with respect to semi-circular elements) is defined as follows.
    \begin{equation*}
        \mathrm{Vect}(\mathbb{C}^s_{\langle n\rangle}|\tau)
        =\left\{(p_{1},\dots,p_{n})\in\mathrm{Vect}(\mathbb{C}^s_{\langle n\rangle})\,\middle|\,\sum_{1\leq j\leq n}\tau(p_{j}\delta^s_{j}[r])=0\mbox{ for all }r\in\mathbb{C}^s_{\langle n\rangle}\right\}.
    \end{equation*}
\end{Def}
By definition, it is clear that $\mathrm{Vect}(\mathbb{C}^s_{\langle n\rangle}|\tau)=\mathrm{Vect}(\mathbb{C}^s_{\langle n\rangle})\ominus\delta^s\mathbb{C}^s_{\langle n\rangle}$. Moreover, we have the next fact:
\begin{Thm}(\cite[Theorem 7.5]{v02})
    We have 
    \begin{equation*}
        L^2(M,\tau)^n\ominus\overline{\delta^s\mathbb{C}^s_{\langle n\rangle}}
        \simeq\mathcal{F}(\mathbb{C}^n)^n\ominus\overline{\delta^s\mathbb{C}^s_{\langle n\rangle}[1\oplus\cdots\oplus1]}=\{\left((l_{j}^*-r_{j}^*)\xi\right)_{j=1}^{n}\,|\,\xi\in\mathcal{F}(\mathbb{C}^n)\}.
    \end{equation*}
    In particular, we also have
    \begin{equation*}
        \mathrm{Vect}(\mathbb{C}^s_{\langle n\rangle}|\tau)[1\oplus\cdots\oplus1]
        =\bigoplus_{k\geq0}\mathcal{X}^{(n)}_{k},
    \end{equation*}
    where $\mathcal{X}^{(n)}_{k}\subset\left[(\mathbb{C}^n)^{\otimes k}\right]^n$, $\mathcal{X}^{(n)}_{0}=\{0\}$, $\mathcal{X}^{(n)}_{k}=\{\left((l_{j}^*-r_{j}^*)\xi\right)_{j=1}^{n}\,|\,\xi\in(\mathbb{C}^n)^{\otimes k+1}\}$.
\end{Thm}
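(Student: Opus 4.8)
The plan is to transport everything to the full Fock space through the unitary $U$, trade $\delta^s$ for $\delta^l$ by Theorem~\ref{range}, and then identify the orthogonal complement by an elementary ``discrete primitive'' computation on cyclic orbits of words. The first point is that the restriction of $U$ to $\mathbb{C}^s_{\langle n\rangle}$ is just the GNS embedding $m\mapsto m\cdot 1$, because $U_{k_1}(s_{i_1})\cdots U_{k_p}(s_{i_p})\cdot 1=e_{i_1^{k_1}\cdots i_p^{k_p}}$; hence $U^{\oplus n}$ carries $\overline{\delta^s\mathbb{C}^s_{\langle n\rangle}}$ onto $\overline{\delta^s\mathbb{C}^s_{\langle n\rangle}[1\oplus\cdots\oplus1]}$ and $(\mathbb{C}^s_{\langle n\rangle})^n=\mathrm{Vect}(\mathbb{C}^s_{\langle n\rangle})$ onto the algebraic (finite-degree) vectors of $\mathcal{F}(\mathbb{C}^n)^n$. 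The first ``$\simeq$'' is then immediate from unitarity of $U$. Using Theorem~\ref{range} we may replace $\delta^s\mathbb{C}^s_{\langle n\rangle}[1\oplus\cdots\oplus1]$ by $\delta^l\mathbb{C}^l_{\langle n\rangle}[1\oplus\cdots\oplus1]$, which is visibly graded: its degree-$(p-1)$ part is the span of the vectors $\Phi(w):=\delta^l(l_{i_1}\cdots l_{i_p})[1\oplus\cdots\oplus1]$ over $w=i_1\cdots i_p\in[n]^p$, and a direct computation shows the $m$-th component of $\Phi(w)$ is $\sum_{k:\,i_k=m}e_{i_{k+1}\cdots i_p i_1\cdots i_{k-1}}$, a sum of length-$(p-1)$ cyclic rotations of $w$ indexed by the deleted letter.

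Next I would work in each fixed degree $p-1$ and encode a vector $(\eta_1,\dots,\eta_n)\in[(\mathbb{C}^n)^{\otimes p-1}]^n$ by the function $g_\eta\colon[n]^p\to\mathbb{C}$, $g_\eta(mv):=\langle\eta_m,e_v\rangle$. Expanding the inner product yields
\[
\langle(\eta_m)_m,\Phi(w)\rangle=\sum_{k=1}^{p}g_\eta\big(\mathrm{rot}^{k-1}w\big),\qquad\text{with }\mathrm{rot}(i_1i_2\cdots i_p):=i_2\cdots i_p i_1 ,
\]
so $(\eta_m)_m$ is orthogonal to this degree of $\delta^l\mathbb{C}^l_{\langle n\rangle}[1\oplus\cdots\oplus1]$ exactly when $g_\eta$ sums to $0$ over every $\mathbb{Z}/p$-orbit of $[n]^p$ (for a word of non-maximal period the $p$ rotations run through its orbit several times, leaving the condition unchanged). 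On the other hand, for $\xi=\sum_{|u|=p}h(u)e_u\in(\mathbb{C}^n)^{\otimes p}$ one computes that $(l_m^*-r_m^*)\xi$ has $e_v$-coefficient $h(mv)-h(vm)=h(mv)-h(\mathrm{rot}(mv))$, so $(\eta_m)_m\in\mathcal{X}^{(n)}_{p-1}$ if and only if the equation $g_\eta(w)=h(w)-h(\mathrm{rot}\,w)$ admits a solution $h\colon[n]^p\to\mathbb{C}$. Solving this discrete primitive equation orbit by orbit --- fix a base point in each orbit and let $h$ be minus the running partial sums of $g_\eta$ along it --- is possible precisely when the sum of $g_\eta$ over each orbit vanishes, which is the same condition as before. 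Hence in every degree the orthogonal complement of $\delta^l\mathbb{C}^l_{\langle n\rangle}[1\oplus\cdots\oplus1]$ equals $\mathcal{X}^{(n)}_{p-1}$, which is the desired Fock-space identity.

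To finish, one feeds Step~1 back in: the first ``$\simeq$'' and the description of $\mathcal{F}(\mathbb{C}^n)^n\ominus\overline{\delta^s\mathbb{C}^s_{\langle n\rangle}[1\oplus\cdots\oplus1]}$ are now established. For the ``in particular'' statement, recall $\mathrm{Vect}(\mathbb{C}^s_{\langle n\rangle}|\tau)=\mathrm{Vect}(\mathbb{C}^s_{\langle n\rangle})\ominus\delta^s\mathbb{C}^s_{\langle n\rangle}$; applying $U^{\oplus n}$ identifies this with the intersection of the finite-degree vectors of $\mathcal{F}(\mathbb{C}^n)^n$ with $\big(\overline{\delta^s\mathbb{C}^s_{\langle n\rangle}[1\oplus\cdots\oplus1]}\big)^{\perp}=\bigoplus_{k\ge0}\mathcal{X}^{(n)}_k$, and since each $\mathcal{X}^{(n)}_k$ is finite-dimensional and homogeneous of degree $k$, that intersection is exactly the algebraic direct sum $\bigoplus_{k\ge0}\mathcal{X}^{(n)}_k$ (with $\mathcal{X}^{(n)}_0=\{0\}$, since $l_j^*1=r_j^*1=0$ and $(l_j^*-r_j^*)e_i=0$).

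The step I expect to be the main obstacle is the equivalence, within Step~2, between ``$(\eta_m)_m$ is orthogonal to $\delta^l\mathbb{C}^l_{\langle n\rangle}[1\oplus\cdots\oplus1]$'' and ``$g_\eta(w)=h(w)-h(\mathrm{rot}\,w)$ is solvable'': vanishing of the cyclic telescoping sum is clearly necessary, and one must argue it is also sufficient via the orbit-by-orbit construction, while carefully handling periodic words (where a single orbit is traversed several times by the $p$ rotations, yet the orbit-sum obstruction is the only one). The remaining ingredients --- identifying $U$ with the GNS map, the componentwise formulas for $\delta^l$ and $l_m^*-r_m^*$ on basis words, and the grading bookkeeping needed to pass between Hilbert-space and algebraic direct sums --- are routine.
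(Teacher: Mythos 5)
The paper does not actually prove this statement (it is quoted from Voiculescu's \emph{Cyclomorphy}, Theorem 7.5), so there is no in-paper proof to compare against. Your reduction steps are sound and in the same spirit as the orbit analysis the paper uses later (Theorem \ref{thmbasis}, Lemma \ref{dinfrlem1}): the GNS identification via $U$, the replacement of $\delta^s$ by $\delta^l$ through Theorem \ref{range}, the componentwise formula for $\delta^l(l_w)[1\oplus\cdots\oplus1]$ and for $l_m^*-r_m^*$, the orbit-sum characterization of orthogonality, and the orbitwise solvability of $g=h-h\circ\mathrm{rot}$ are all correct. This correctly yields the graded statement: in each degree $k$ the orthogonal complement of $\delta^l(\mathbb{C}^l_{\langle n\rangle})_{k+1}[1\oplus\cdots\oplus1]$ inside $\left[(\mathbb{C}^n)^{\otimes k}\right]^n$ is exactly $\mathcal{X}^{(n)}_k$, and hence the ``in particular'' identity $\mathrm{Vect}(\mathbb{C}^s_{\langle n\rangle}|\tau)[1\oplus\cdots\oplus1]=\bigoplus_{k\geq0}\mathcal{X}^{(n)}_k$, which is the part the paper actually uses.

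The gap is the sentence in your Step 2 declaring the degreewise result to be ``the desired Fock-space identity.'' The first display concerns the full Hilbert-space complement, and your argument only identifies it with the $\ell^2$-direct sum of the spaces $\mathcal{X}^{(n)}_k$, i.e.\ with the \emph{closure} of $\{((l_j^*-r_j^*)\xi)_{j=1}^n \mid \xi\in\mathcal{F}(\mathbb{C}^n)\}$. To pass to the set itself one would need the range of $(\theta^l)^*:\xi\mapsto((l_j^*-r_j^*)\xi)_j$ to be closed, and for $n\geq2$ it is not: on $(\mathbb{C}^n)^{\otimes m}$ one has $\theta^l(\theta^l)^*=2I-R-R^{-1}$, and taking an aperiodic word $w$ of length $m$ and the vector $\zeta_m=\sum_{j=0}^{m-1}e^{2\pi i j/m}e_{\mathrm{rot}^j w}$ gives a nonzero eigenvalue $2-2\cos(2\pi/m)\to0$, so the nonzero singular values of $(\theta^l)^*$ in degree $m$ tend to $0$ and an $\ell^2$-convergent sum $\sum_k(\theta^l)^*\xi_{k+1}$ need not come from a single square-summable $\xi$ (e.g.\ weighting the normalized $(\theta^l)^*\zeta_m$ by $1/m$ produces an element of the complement that is not in the range). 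So the displayed equality can only hold with a closure on the right-hand side, or with $\xi$ restricted to vectors of finite degree; your degreewise argument cannot bridge this, and no additional argument could, since the literal set equality fails. Flag this as an interpretation/closure issue in the quoted statement rather than assert it follows from the graded computation; everything else in your proposal, including the finite-degree bookkeeping in your last step, is fine.
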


Here is a consequence of these available facts:
\begin{Lem}(\cite[Lemma 7.7]{v02})\label{dinfrlem1}
    We have $\ker\left((\theta^l)^*|_{(\mathbb{C}^n)^{\otimes k}}\right)=\ker\left((I-R)|_{(\mathbb{C}^n)^{\otimes k}}\right)$ for $k \ge 1$, where $\theta^l$ is the linear map from $\mathcal{F}(\mathbb{C}^n)^n$ to $\mathcal{F}(\mathbb{C}^n)$ such that $\theta^l[(\xi_{1},\dots,\xi_{n})]=\sum_{j=1}^{n}(l_{j}-r_{j})\xi_{j}$ and $R$ is the cyclic permutation, that is, $R(e_{i_{1}i_{2}\cdots i_{p}})=e_{i_{p}i_{1}\cdots i_{p-1}}$.
\end{Lem}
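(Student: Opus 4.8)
The plan is to compute the adjoint $(\theta^l)^*$ explicitly on $(\mathbb{C}^n)^{\otimes k}$ and compare its kernel with that of $I - R$. First I would unwind the definition: $\theta^l[(\xi_1,\dots,\xi_n)] = \sum_{j=1}^n (l_j - r_j)\xi_j$, so for $\eta \in (\mathbb{C}^n)^{\otimes k-1}$ and $\zeta \in (\mathbb{C}^n)^{\otimes k}$ one has $\langle \theta^l[(\xi_1,\dots,\xi_n)], e_{i_1\cdots i_k}\rangle = \langle \xi_{i_1}, e_{i_2\cdots i_k}\rangle - \langle \xi_{i_k}, e_{i_1\cdots i_{k-1}}\rangle$. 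Reading this off, the adjoint is $(\theta^l)^*[\zeta] = \big((l_j^* - r_j^*)\zeta\big)_{j=1}^n$ for $\zeta \in (\mathbb{C}^n)^{\otimes k}$, landing in $\big[(\mathbb{C}^n)^{\otimes k-1}\big]^n$. (This is exactly the map appearing in Theorem 7.5, so the identification is consistent with the earlier facts.) Thus $\zeta \in \ker\big((\theta^l)^*|_{(\mathbb{C}^n)^{\otimes k}}\big)$ iff $l_j^*\zeta = r_j^*\zeta$ for every $j \in [n]$.

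Next I would reinterpret the condition $l_j^*\zeta = r_j^*\zeta$ for all $j$ in terms of coordinates. Writing $\zeta = \sum_{w \in [n]^k} c_w\, e_w$, the vector $l_j^*\zeta$ picks out words beginning with $j$ and strips that first letter, while $r_j^*\zeta$ picks out words ending with $j$ and strips that last letter. Equating coefficients at an arbitrary word $u = i_1\cdots i_{k-1}$: the condition says $c_{j\,i_1\cdots i_{k-1}} = c_{i_1\cdots i_{k-1}\,j}$ for all $j$ and all $u$. Letting $v$ range over $[n]^k$, this is precisely $c_v = c_{R(v)}$, i.e. $\zeta$ is invariant under the cyclic permutation $R$ (note $R$ sends the word $w_1 w_2 \cdots w_k$ to $w_k w_1 \cdots w_{k-1}$, which matches moving the last letter to the front — exactly the two operations above). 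Hence $\ker\big((\theta^l)^*|_{(\mathbb{C}^n)^{\otimes k}}\big) = \{\zeta : R\zeta = \zeta\} = \ker\big((I-R)|_{(\mathbb{C}^n)^{\otimes k}}\big)$.

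I expect the only genuine subtlety is bookkeeping with the direction of the cyclic shift and the fact that the equation $l_j^*\zeta = r_j^*\zeta$ must hold simultaneously for \emph{all} $j$: for a single $j$ it only relates words starting with $j$ to words ending with $j$, but ranging over all $j$ and all length-$(k-1)$ tails $u$ exhausts every pair $(v, R(v))$ with $v \in [n]^k$, which is what closes the argument. One should also note the edge behavior: for $k = 1$ the map $(\theta^l)^*$ sends $(\mathbb{C}^n)^{\otimes 1}$ into $\big[(\mathbb{C}^n)^{\otimes 0}\big]^n = \mathbb{C}^n$ via $e_i \mapsto (l_j^* - r_j^*)e_i$, both of which equal the scalar $\delta_{ij}$, so $(\theta^l)^*$ is the zero map and $R = I$ on $(\mathbb{C}^n)^{\otimes 1}$, so both kernels are all of $\mathbb{C}^n$ — consistent. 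The argument is otherwise a short direct computation, so no deep obstacle arises; the main thing is to present the coefficient comparison cleanly.
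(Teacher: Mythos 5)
Your proof is correct. Note that the paper does not actually prove this lemma --- it is quoted verbatim from Voiculescu (\cite[Lemma 7.7]{v02}) --- so there is no in-paper argument to compare against; your direct computation (identify $(\theta^l)^*\zeta=\bigl((l_j^*-r_j^*)\zeta\bigr)_{j=1}^n$, then read off from the coefficients that $l_j^*\zeta=r_j^*\zeta$ for all $j$ is exactly $c_{uj}=c_{ju}$ for all $u,j$, i.e.\ $R\zeta=\zeta$) is a complete, self-contained proof, and your form of the adjoint matches the one the authors use later in the proof of Proposition \ref{lembasisdivfree}. The bookkeeping of the shift direction and the $k=1$ edge case are handled correctly.
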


We use the same notation $R$ for the cyclic permutation on $[n]^*\setminus \{\epsilon\}$, that is, $R(i_{1}\cdots i_{k-1}i_{k})=i_{k}i_{1}\cdots i_{k-1}$ for all $i_{1}\cdots i_{k}\in [n]^*$. 

At the end of this section, we exhibit an interesting example of vectors in the free divergence-free vector field, which is inspired by the classical case when the stream function is radially symmetric.
\begin{Prop}
For any $m\in\mathbb{N}$, we have
\[\begin{pmatrix}\delta_2(s_1^2+s_2^2)^m\\-\delta_1(s_1^2+s_2^2)^m\end{pmatrix}\in \mathrm{Vect}(\mathbb{C}^s_{\langle 2\rangle}|\tau). \]
\end{Prop}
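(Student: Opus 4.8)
Write $Q:=s_1^{2}+s_2^{2}$, and for $k\ge 0$ and $r\in\mathbb{C}^s_{\langle 2\rangle}$ put
\[
\Psi_k(r):=\tau\bigl(\delta^s_2(Q^k)\,\delta^s_1(r)\bigr)-\tau\bigl(\delta^s_1(Q^k)\,\delta^s_2(r)\bigr),
\]
where $\delta_j=\delta^s_j$. Since $\mathrm{Vect}(\mathbb{C}^s_{\langle 2\rangle}|\tau)=\mathrm{Vect}(\mathbb{C}^s_{\langle 2\rangle})\ominus\delta^s\mathbb{C}^s_{\langle 2\rangle}$, the statement is exactly that $\Psi_m(r)=0$ for every $r$ (the case $m=0$ being trivial). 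The plan is: (i) compute $\delta^s_j(Q^m)$ in closed form; (ii) use the conjugate-variable relation to rewrite $\Psi_m$ through the free difference quotient; (iii) expand and show that $\Psi_m$ satisfies a recursion in $m$ whose base case vanishes by a flip-symmetry of the free difference quotient.

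For (i): recall the free difference quotient $\partial_j\colon\mathbb{C}^s_{\langle 2\rangle}\to\mathbb{C}^s_{\langle 2\rangle}\otimes\mathbb{C}^s_{\langle 2\rangle}$ (the derivation with $\partial_j(s_i)=\delta_{ij}\,1\otimes1$), and note that $\delta^s_j=(a\otimes b\mapsto ba)\circ\partial_j$. A short Leibniz computation gives $\partial_j(Q)=1\otimes s_j+s_j\otimes1$, hence $\delta^s_j(Q^m)=m\,(s_jQ^{m-1}+Q^{m-1}s_j)$ for $j=1,2$. Substituting this into $\Psi_m$, using traciality of $\tau$ to bring each summand into the form $\tau\bigl(s_j\,(Q^{m-1}\delta^s_k(r)+\delta^s_k(r)Q^{m-1})\bigr)$, and then the Schwinger--Dyson relation $\tau(s_ja)=(\tau\otimes\tau)(\partial_ja)$, I expect to reach
\[
\Psi_m(r)=-m\,(\tau\otimes\tau)\Bigl[\partial_1\bigl(Q^{m-1}\delta^s_2(r)+\delta^s_2(r)Q^{m-1}\bigr)-\partial_2\bigl(Q^{m-1}\delta^s_1(r)+\delta^s_1(r)Q^{m-1}\bigr)\Bigr].
\]

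For (iii): expand the right-hand side by Leibniz. The terms in which $\partial_j$ differentiates the factor $\delta^s_k(r)$ collect into $(\tau\otimes\tau)\bigl[(Q^{m-1}\otimes1+1\otimes Q^{m-1})\,\bigl(\partial_1\delta^s_2(r)-\partial_2\delta^s_1(r)\bigr)\bigr]$, which is $0$: indeed $\partial_1\circ\delta^s_2=\sigma\circ\partial_2\circ\delta^s_1$, where $\sigma(a\otimes b)=b\otimes a$ is the tensor flip — this follows from the coassociativity identity $(\partial_1\otimes\mathrm{id})\circ\partial_2=(\mathrm{id}\otimes\partial_2)\circ\partial_1$ together with the description of $\delta^s_j$ above — while $Q^{m-1}\otimes1+1\otimes Q^{m-1}$ is $\sigma$-fixed and $(\tau\otimes\tau)\circ\sigma=\tau\otimes\tau$. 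The terms in which $\partial_j$ differentiates $Q^{m-1}$ are evaluated from $\partial_1(Q^k)=\sum_{a+b=k-1}\bigl(Q^a\otimes s_1Q^b+Q^as_1\otimes Q^b\bigr)$ and the parity vanishing $\tau(Q^as_j)=0$ (valid because the $\tau$-preserving automorphism $s_j\mapsto-s_j$ fixes $Q$); after reindexing and running $\tau(s_ja)=(\tau\otimes\tau)(\partial_ja)$ backwards, they regroup into lower-order $\Psi_c$'s, yielding the recursion
\[
\frac{\Psi_m(r)}{m}=\sum_{c=1}^{m-1}\tau\bigl(Q^{\,m-1-c}\bigr)\,\frac{\Psi_c(r)}{c}\qquad(m\ge2).
\]
Its base case $\Psi_1(r)=2\tau\bigl(s_2\delta^s_1(r)\bigr)-2\tau\bigl(s_1\delta^s_2(r)\bigr)=-2\,(\tau\otimes\tau)\bigl[(\sigma-\mathrm{id})\,\partial_2\delta^s_1(r)\bigr]=0$ is again a consequence of the flip identity, and induction on $m$ then gives $\Psi_m\equiv0$, which is the claim.

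The main obstacle is the bookkeeping in step (iii): isolating precisely the block of terms annihilated by $\partial_1\delta^s_2=\sigma\partial_2\delta^s_1$, and verifying that the surviving terms reassemble — after the cyclic and $\tau\otimes\tau$ manipulations — into $\sum_c\tau(Q^{m-1-c})\Psi_c/c$ and not into some other combination. (An alternative would be to invoke the Fock-space description $\mathrm{Vect}(\mathbb{C}^s_{\langle 2\rangle}|\tau)[1\oplus1]=\bigoplus_k\mathcal{X}^{(2)}_k$ together with Lemma~\ref{dinfrlem1}, and to exhibit a ``stream potential'' $\xi\in\mathcal{F}(\mathbb{C}^2)$ with $\delta^s_2(Q^m)1=(l_1^*-r_1^*)\xi$ and $-\delta^s_1(Q^m)1=(l_2^*-r_2^*)\xi$; such $\xi$ can be built degree by degree, but a transparent closed form seems harder to obtain than the recursion above.)
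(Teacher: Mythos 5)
Your proposal is correct and takes essentially the same approach as the paper's proof: there the statement is proved by induction on $m$ using $\delta_j\bigl((s_1^2+s_2^2)^m\bigr)=m\bigl(s_j(s_1^2+s_2^2)^{m-1}+(s_1^2+s_2^2)^{m-1}s_j\bigr)$, the Stein relation $\tau(s_ja)=(\tau\otimes\tau)(\partial_ja)$, the vanishing of odd moments, and the cancellation $\tau\otimes\tau\bigl[(\partial_2\delta_1 r)\cdot 1\otimes X\bigr]=\tau\otimes\tau\bigl[X\otimes 1\cdot(\partial_1\delta_2 r)\bigr]$, which is exactly your flip identity $\partial_1\delta_2=\sigma\circ\partial_2\circ\delta_1$ combined with traciality, and the surviving terms reduce to the lower-degree divergence-free conditions just as in your recursion $\Psi_m/m=\sum_{c=1}^{m-1}\tau\bigl(Q^{m-1-c}\bigr)\Psi_c/c$. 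The bookkeeping you flag in step (iii) does work out precisely as you predict, so your plan is a faithful reformulation of the paper's argument.
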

\begin{proof}
We show this proposition by induction on $m$. When $m=1$, $(\delta_2 (s_1^2+s_2^2) , -\delta_1 (s_1^2+s_2^2))= 2(s_2,-s_1)\in  \mathrm{Vect}(\mathbb{C}^s_{\langle 2\rangle}|\tau) $. Suppose that we have $(\delta_2 f , -\delta_1 f) \in  \mathrm{Vect}(\mathbb{C}^s_{\langle 2\rangle}|\tau)$ for $f=(s_1^2+s_2^2)^k$ ($1\le k \le m$).
We use the fact that free semi-circular system $(s_1,s_2)$ satisfies the analogue of the Stein equation (cf. \cite{v98-0}):
\[\tau[s_iP(s_1,s_2)]=\tau\otimes\tau[\partial_iP(s_1,s_2)]\]
for any non-commutative polynomial $P(s_1,s_2)$ where $\partial_i: \mathbb{C}^s_{\langle 2\rangle}\to \mathbb{C}^s_{\langle 2\rangle}\otimes \mathbb{C}^s_{\langle 2\rangle}$ is the free (partial) difference quotient which is a linear map defined for each monomial $P$ by
\[\partial_i P = \sum_{P=As_iB}A\otimes B.\]
From the Leibniz rule of free difference quotients (note that $\partial_i(s_1^2+s_2^2) = s_i\otimes 1 + 1\otimes s_i$ for $i=1,2$), we have for $f=(s_1^2+s_2^2)^{m+1}$
\[ (\delta_2 f , -\delta_1 f)= (m+1)[(s_1^2+s_2^2)^m v + v (s_1^2+s_2^2)^m] \]
where $ v=(s_2,-s_1)$. Therefore, for a given $r \in \mathbb{C}^s_{\langle 2\rangle}$, we want to show
\begin{equation}\label{def_divfree}\tau[s_2 (\delta_1 r)(s_1^2+s_2^2)^m ]+\tau[s_2 (s_1^2+s_2^2)^m (\delta_1 r)]-\tau[s_1 (\delta_2 r)(s_1^2+s_2^2)^m ]-\tau[s_1 (s_1^2+s_2^2)^m(\delta_2 r) ]=0.\end{equation}
By using the formula $\tau[s_i P(s_1,s_2)]= \tau \otimes \tau [\partial_i P(s_1,s_2)]$, we have from the Leibniz rule,
\begin{align*}
\tau[s_2 (\delta_1 r)(s_1^2+s_2^2)^m ]&=\tau \otimes \tau [(\partial_2(\delta_1 r)) \cdot 1\otimes (s_1^2+s_2^2)^m]\\
& \quad +\sum_{k=1}^m \tau \otimes \tau [(\delta_1 r)(s_1^2+s_2^2)^{k-1} (s_2\otimes 1 + 1\otimes s_2)(s_1^2+s_2^2)^{m-k}]\\
\tau[s_2 (s_1^2+s_2^2)^m (\delta_1 r)]&=\tau \otimes \tau [(s_1^2+s_2^2)^m\otimes 1 \cdot (\partial_2(\delta_1 r))]\\
& \quad +\sum_{k=1}^m \tau \otimes \tau [(s_1^2+s_2^2)^{m-k} (s_2\otimes 1 + 1\otimes s_2)(s_1^2+s_2^2)^{k-1} (\delta_1 r)]\\
\tau[s_1 (\delta_2 r)(s_1^2+s_2^2)^m ]&=\tau \otimes \tau [(\partial_1(\delta_2 r)) \cdot 1\otimes (s_1^2+s_2^2)^m]\\
& \quad +\sum_{k=1}^m \tau \otimes \tau [(\delta_2 r)(s_1^2+s_2^2)^{k-1} (s_1\otimes 1 + 1\otimes s_1)(s_1^2+s_2^2)^{m-k}]\\
\tau[s_1 (s_1^2+s_2^2)^m (\delta_2 r)]&=\tau \otimes \tau [(s_1^2+s_2^2)^m\otimes 1 \cdot (\partial_1(\delta_2 r))]\\
& \quad +\sum_{k=1}^m \tau \otimes \tau [(s_1^2+s_2^2)^{m-k} (s_1\otimes 1 + 1\otimes s_1)(s_1^2+s_2^2)^{k-1} (\delta_2 r)].\\
\end{align*}
Since odd moments of the free semi-circular system are zero, we have $\tau[s_i(s_1^2+s_2^2)^{m-k}] = 0$ for $i=1,2$, and thus we have
\begin{align*}
\tau[s_2 (\delta_1 r)(s_1^2+s_2^2)^m ]&=\tau \otimes \tau [(\partial_2(\delta_1 r)) \cdot 1\otimes (s_1^2+s_2^2)^m]\\
& \quad +\sum_{k=1}^m \tau  [(\delta_1 r)(s_1^2+s_2^2)^{k-1} s_2]\tau [(s_1^2+s_2^2)^{m-k}]\\
\tau[s_2 (s_1^2+s_2^2)^m (\delta_1 r)]&=\tau \otimes \tau [(s_1^2+s_2^2)^m\otimes 1 \cdot (\partial_2(\delta_1 r))]\\
& \quad +\sum_{k=1}^m \tau [(s_1^2+s_2^2)^{m-k}] \tau[s_2(s_1^2+s_2^2)^{k-1} (\delta_1 r)]\\
\tau[s_1 (\delta_2 r)(s_1^2+s_2^2)^m ]&=\tau \otimes \tau [(\partial_1(\delta_2 r)) \cdot 1\otimes (s_1^2+s_2^2)^m]\\
& \quad +\sum_{k=1}^m \tau [(\delta_2 r)(s_1^2+s_2^2)^{k-1}s_1 ]\tau[ (s_1^2+s_2^2)^{m-k}]\\
\tau[s_1 (s_1^2+s_2^2)^m (\delta_2 r)]&=\tau \otimes \tau [(s_1^2+s_2^2)^m\otimes 1 \cdot (\partial_1(\delta_2 r))]\\
& \quad +\sum_{k=1}^m  \tau [(s_1^2+s_2^2)^{m-k}]\tau[s_1(s_1^2+s_2^2)^{k-1} (\delta_2 r)].\\
\end{align*}
Then, the left-hand side of (\ref{def_divfree}) is equal to
\begin{align*}
& \tau \otimes \tau[(\partial_2(\delta_1 r)) \cdot 1\otimes (s_1^2+s_2^2)^m]+\tau \otimes \tau [(s_1^2+s_2^2)^m\otimes 1 \cdot (\partial_2(\delta_1 r))]\\
&  -\tau \otimes \tau [(\partial_1(\delta_2 r)) \cdot 1\otimes (s_1^2+s_2^2)^m]-\tau \otimes \tau [(s_1^2+s_2^2)^m\otimes 1 \cdot (\partial_1(\delta_2 r))] \\
&  + \sum_{k=1}^m\tau [(s_1^2+s_2^2)^{m-k}]\Big{(}\tau[(\delta_1 r)(s_1^2+s_2^2)^{k-1} s_2]+\tau[s_2(s_1^2+s_2^2)^{k-1} (\delta_1 r)]\\
& \quad \quad \quad  -\tau [(\delta_2 r)(s_1^2+s_2^2)^{k-1}s_1 ]-\tau[s_1(s_1^2+s_2^2)^{k-1} (\delta_2 r)]\Big{)}
\end{align*}
By using the trace property of $\tau$ and the assumption of induction $f=(s_1^2+s_2^2)^k$, the sum in the third and fourth lines is equal to $0$.
Moreover, we have from the trace property of $\tau$
\[ \tau\otimes \tau [(\partial_2(\delta_1 r)) \cdot 1\otimes X]-\tau \otimes \tau[X\otimes 1 \cdot (\partial_1(\delta_2 r))]=0\]
for any $X \in M$. By replacing $1$ and $2$ in the formula above, we can see the identity (\ref{def_divfree}) for any $r \in \mathbb{C}^s_{\langle 2\rangle}$, and thus $f=(s_1^2+s_2^2)^{m+1}$ satisfies $(\delta_2 f , -\delta_1 f) \in \mathrm{Vect}(\mathbb{C}^s_{\langle 2\rangle}|\tau)$, which completes the induction.
\end{proof}

\section{The dimension of homogeneous free divergence-free vector field}
In this section, we exhibit an orthonormal basis of $\delta^{l}(\mathbb{C}^l_{\langle n\rangle})_{k+1}\simeq\delta^{l}(\mathbb{C}^l_{\langle n\rangle})_{k+1}[1\oplus\cdots\oplus1]$ and compute the dimension of the homogeneous free divergence-free vector field $\mathcal{X}^{(n)}_{k}$ of degree $k$ for each $k\in\mathbb{N}$ and each $n\in\mathbb{N}$. 
The key point of our argument is that the cyclic gradient $\delta^l$ is invariant under the cyclic permutation, i.e., $\delta^l (l_{R u}) = \delta^l (l_{u})$ for any $u \in [n]^*$ where we write $l_u=l_{u_1}l_{u_2}\cdots l_{u_p}$ for $u=u_1u_2\cdots u_p$. Note that the cyclic permutation $R$ on $[n]^k$ induces a group action of $\mathbb{Z}_k=\quotient{\mathbb{Z}}{k\mathbb{Z}}$ on $[n]^k$. Thus, we can decompose $[n]^k$ into the orbits of this action, and we have $\delta^l (l_u) = \delta^l (l_{u'})$ if $u'$ is in the orbit $[u]=\mathbb{Z}_k u=\{gu \in [n]^k\ | \ g \in \mathbb{Z}_k\} $.   

In the following theorem, we see that $\delta^l (l_u) [1 \oplus \cdots \oplus 1]$ is orthogonal to $\delta^l (l_{u'}) [1 \oplus \cdots \oplus 1]$ if $u$ and $u'$ are not in the same orbit. Moreover, we can normalize $\delta^l (l_u) [1 \oplus \cdots \oplus 1]$ by using the order of the stabilizer subgroup $(\mathbb{Z}_k)_u = \{g \in \mathbb{Z}_k \ | \ g u = u\}$ of $u \in [n]^k$.
\begin{Thm}\label{thmbasis}
    For each $k \in \mathbb{Z}_{\ge 0}$, the subset of vectors in $\left[(\mathbb{C}^n)^{\otimes k}\right]^n $
    \[
    S_{k}=\left\{F([u]):=\frac{\delta^l(l_{u})[1\oplus\cdots\oplus1]}{|(\mathbb{Z}_{k+1})_u|\sqrt{|[u]|}}\,\middle|\,[u]\in \quotient{[n]^{k+1}}{\mathbb{Z}_{k+1}}\right\}  
    \]
    is an orthonormal basis of $\delta^{l}(\mathbb{C}^l_{\langle n\rangle})_{k+1}[1\oplus\cdots\oplus1]$, 
    where we set $l_{i_{1}i_{2}\cdots i_{p}}=l_{i_{1}}l_{i_{2}}\cdots l_{i_{p}}$.
\end{Thm}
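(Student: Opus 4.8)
The plan is to establish, in this order, that the vectors $F([u])$ are well defined, that they span $\delta^{l}(\mathbb{C}^l_{\langle n\rangle})_{k+1}[1\oplus\cdots\oplus1]$, and that they are orthonormal; since $(\mathbb{C}^n)^{\otimes k}$ is finite-dimensional, these three facts together give the theorem (and, as a byproduct, $\dim\delta^{l}(\mathbb{C}^l_{\langle n\rangle})_{k+1}[1\oplus\cdots\oplus1]=|\quotient{[n]^{k+1}}{\mathbb{Z}_{k+1}}|$). First I would record the explicit formula: writing $u=i_{1}\cdots i_{k+1}\in[n]^{k+1}$ and $w_{j}(u):=i_{j+1}\cdots i_{k+1}i_{1}\cdots i_{j-1}\in[n]^{k}$ for the word obtained by deleting the $j$-th letter of $u$ and reading the rest cyclically from position $j+1$, one has
\[
\delta^{l}(l_{u})[1\oplus\cdots\oplus1]=\sum_{j=1}^{k+1}e_{w_{j}(u)}\otimes f_{i_{j}}\in(\mathbb{C}^n)^{\otimes k}\otimes\mathbb{C}^n .
\]
The cyclic invariance $\delta^{l}(l_{Ru})=\delta^{l}(l_{u})$ (immediate by reindexing the sum) shows that $\delta^{l}(l_{u})[1\oplus\cdots\oplus1]$, as well as the orbit-invariant scalars $|(\mathbb{Z}_{k+1})_{u}|$ and $|[u]|$, depend only on $[u]$, so $F([u])$ is well defined; and since $(\mathbb{C}^l_{\langle n\rangle})_{k+1}$ is spanned by $\{l_{u}\mid u\in[n]^{k+1}\}$, the family $S_{k}$ spans $\delta^{l}(\mathbb{C}^l_{\langle n\rangle})_{k+1}[1\oplus\cdots\oplus1]$.

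The heart of the matter is a bijective reformulation of the data $(w_{j}(u),i_{j})$: reinserting the marked letter at the front recovers the cyclically rotated word $i_{j}w_{j}(u)=i_{j}i_{j+1}\cdots i_{k+1}i_{1}\cdots i_{j-1}\in[u]$, and this is reversible (the first letter and the remaining length-$k$ tail of a rotation give back $(w_{j},i_{j})$). Using $\langle e_{w}\otimes f_{i},e_{w'}\otimes f_{i'}\rangle=\delta_{w,w'}\delta_{i,i'}$ on $(\mathbb{C}^n)^{\otimes k}\otimes\mathbb{C}^n$, the formula above yields, for $u,u'\in[n]^{k+1}$ (with letters of $u'$ written $i'_{1}\cdots i'_{k+1}$),
\[
\bigl\langle\delta^{l}(l_{u})[1\oplus\cdots\oplus1],\ \delta^{l}(l_{u'})[1\oplus\cdots\oplus1]\bigr\rangle=\#\bigl\{(j,j')\mid i_{j}w_{j}(u)=i'_{j'}w_{j'}(u')\bigr\},
\]
i.e.\ the number of pairs consisting of a rotation of $u$ and a rotation of $u'$ that coincide. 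If $[u]\neq[u']$ this count is $0$, so $F([u])\perp F([u'])$.

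If $[u]=[u']$, take $u=u'$. The map $j\mapsto i_{j}w_{j}(u)$ sends $\{1,\dots,k+1\}$ onto $[u]$ and is a relabeling of the orbit map of the $\mathbb{Z}_{k+1}$-action, so by the orbit–stabilizer theorem every element of $[u]$ has exactly $|(\mathbb{Z}_{k+1})_{u}|$ preimages; hence the count above equals $\sum_{v\in[u]}|(\mathbb{Z}_{k+1})_{u}|^{2}=|[u]|\cdot|(\mathbb{Z}_{k+1})_{u}|^{2}$, giving $\|\delta^{l}(l_{u})[1\oplus\cdots\oplus1]\|^{2}=|[u]|\cdot|(\mathbb{Z}_{k+1})_{u}|^{2}$ and therefore $\|F([u])\|=1$. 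This finishes orthonormality and hence the proof. The genuinely delicate point is this last counting step: one must correctly identify $j\mapsto i_{j}w_{j}(u)$ with the orbit map and apply orbit–stabilizer to see that each rotation occurs with multiplicity exactly $|(\mathbb{Z}_{k+1})_{u}|$ rather than $1$, which is precisely what makes $|(\mathbb{Z}_{k+1})_{u}|\sqrt{|[u]|}$ the correct normalizing factor; the explicit formula for $\delta^{l}(l_{u})[1\oplus\cdots\oplus1]$ and the spanning statement are routine.
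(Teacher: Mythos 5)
Your proposal is correct and follows essentially the same route as the paper: the explicit formula $\delta^{l}(l_{u})[1\oplus\cdots\oplus1]=\sum_{j}e_{w_{j}(u)}\otimes f_{i_{j}}$, orthogonality across distinct orbits via matching of summands, the norm computation $\|\delta^{l}(l_{u})[1\oplus\cdots\oplus1]\|^{2}=|(\mathbb{Z}_{k+1})_{u}|^{2}\,|[u]|$, and the spanning argument. The only cosmetic difference is that you obtain the norm by counting coinciding rotations through the orbit--stabilizer theorem, whereas the paper writes $u=v^{m}$ with $v$ the primitive period and collects the $m$-fold repeated orthonormal terms; these are the same computation.
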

\begin{proof}
    First, we see that $\delta^l (l_u) [1 \oplus \cdots \oplus 1]$ is orthogonal to $\delta^l (l_{u'}) [1 \oplus \cdots \oplus 1]$ if $u'\notin [u]$. Under the identification $\left[(\mathbb{C}^n)^{\otimes k+1}\right]^n \simeq (\mathbb{C}^n)^{\otimes k+1} \otimes \mathbb{C}^n$, we write the orthonormal basis of $\left[(\mathbb{C}^n)^{\otimes k+1}\right]^n $ by $\{e_w \otimes f_i\}_{w \in [n]^{k+1}, i \in [n]}$ (recall that $\{f_i\}_{i=1}^n$ denotes the standard basis). Then, for each $u=i_1i_2\cdots i_{k+1} \in [n]^{k+1}$, the cyclic derivative $\delta^l (l_u) [1 \oplus \cdots \oplus 1]$ is written by 
    \[ \sum_{j=1}^{k+1} e_{i_{j+1}\cdots i_{k+1}i_{1}\cdots i_{j-1}} \otimes f_{i_j}.\]
    If $\delta^l (l_u) [1 \oplus \cdots \oplus 1]$ is not orthogonal to $\delta^l (l_{u'}) [1 \oplus \cdots \oplus 1]$ with $u=i_1i_2\cdots i_{k+1}$ and $u'={i}_1'{i}_2'\cdots{i}_{k+1}'$, there exists $j,j' \in [k+1]$ such that $i_j={i}_{j'}'$ and 
\[ i_{j+1}\cdots i_{k+1}i_{1}\cdots i_{j-1}={i}_{j'+1}'\cdots {i}_{k+1}'{i}_{1}'\cdots {i}_{j'-1}',\]
implying that $u$ and $u'$ are in the same orbit. Therefore, $\delta^l (l_u) [1 \oplus \cdots \oplus 1]$ is orthogonal to $\delta^l (l_{u'}) [1 \oplus \cdots \oplus 1]$ if $u'\notin [u]$. 

Note that, if $p$ is the minimal number (generator) in the stabilizer subgroup $(\mathbb{Z}_{k+1})_u$ (which is also a cyclic group), then we have $u=v^{m}$ with $v=i_1i_2\cdots i_p$ and $m=|(\mathbb{Z}_{k+1})_u|$ and $p=\frac{k+1}{m}=|[u]|$. Thus, we obtain 
\[ \delta^l (l_u) [1 \oplus \cdots \oplus 1]= m \sum_{j=1}^p e_{i_{j+1}\cdots i_p v^{m-1} i_1\cdots i_{j-1}} \otimes f_{i_j}.\]
The minimality of $p$ implies that all vectors in the sum are orthonormal, and hence we have
\[ \|\delta^l (l_u) [1 \oplus \cdots \oplus 1]\|^2 = m^2 p = |(\mathbb{Z}_{k+1})_u|^2\cdot |[u]|.\]
Since $\delta^{l}(\mathbb{C}^l_{\langle n\rangle})_{k+1}[1\oplus\cdots\oplus1]$ is spanned by $\delta^l (l_u) [1 \oplus \cdots \oplus 1]$ ($u\in [n]^{k+1}$) and $F([u])$ does not depend on the choice of words in the same orbit $[u]$, we can conclude that $S_k=\{F([u]) \ | \ [u] \in \quotient{[n]^{k+1}}{\mathbb{Z}_{k+1}}\}$ is an orthonormal basis of $\delta^{l}(\mathbb{C}^l_{\langle n\rangle})_{k+1}[1\oplus\cdots\oplus1]$.
\end{proof}

\begin{Cor}\label{dimcygra}
    We have 
    $
    \dim\left(\delta^s(\mathbb{C}^s_{\langle n\rangle})_{k+1}\right)=
    \dim\left(\delta^l(\mathbb{C}^l_{\langle n\rangle})_{k+1}\right)= \left|\quotient{[n]^{k+1}}{\mathbb{Z}_{k+1}}\right|
    $, and hence
    \[
    \dim\left(\mathcal{X}^{(n)}_{k}\right)=n^{k+1}-\left|\quotient{[n]^{k+1}}{\mathbb{Z}_{k+1}}\right|
    \]
    for any $n\in\mathbb{N}$ and $k \in \mathbb{Z}_{\ge 0}$. Thus, we obtain that
    \[
    \dim
    \left(
    \mathrm{Vect}(\mathbb{C}^s_{\langle n\rangle}|\tau)_{\leq k}
    \right)
    =
    \frac{n(n^{k+1}-1)}{n-1}-\sum_{j=0}^k\left|\quotient{[n]^{j+1}}{\mathbb{Z}_{j+1}}\right|,
    \]
    where $\mathrm{Vect}(\mathbb{C}^s_{\langle n\rangle}|\tau)_{\leq k}$ is the subspace of $\mathrm{Vect}(\mathbb{C}^s_{\langle n\rangle}|\tau)$ of all elements whose degrees as polynomials with respect to $\{s_{i}\}_{i=1}^{n}$ are $k$ or less.
\end{Cor}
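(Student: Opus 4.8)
The plan is to read the three assertions off from Theorem~\ref{thmbasis}, Theorem~\ref{range} and \cite[Theorem~7.5]{v02}, with only elementary degree bookkeeping in between. \emph{For the equalities $\dim\delta^s(\mathbb{C}^s_{\langle n\rangle})_{k+1}=\dim\delta^l(\mathbb{C}^l_{\langle n\rangle})_{k+1}=\big|\quotient{[n]^{k+1}}{\mathbb{Z}_{k+1}}\big|$}: the isomorphism $\delta^l(\mathbb{C}^l_{\langle n\rangle})_{k+1}\simeq\delta^l(\mathbb{C}^l_{\langle n\rangle})_{k+1}[1\oplus\cdots\oplus1]$ noted before Theorem~\ref{thmbasis}, together with that theorem, gives $\dim\delta^l(\mathbb{C}^l_{\langle n\rangle})_{k+1}=|S_k|=\big|\quotient{[n]^{k+1}}{\mathbb{Z}_{k+1}}\big|$. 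For the $s$-side I would use that $\{s_{i_1}\cdots s_{i_{k+1}}:(i_1,\dots,i_{k+1})\in[n]^{k+1}\}$ is a basis of $(\mathbb{C}^s_{\langle n\rangle})_{k+1}$ (the semicircular generators are algebraically free; equivalently $s_{i_1}\cdots s_{i_{k+1}}$ applied to the vacuum is $e_{i_1\cdots i_{k+1}}$ plus terms of lower Fock degree, hence these monomials are independent) and, likewise, $\{s_{j_1}\cdots s_{j_k}\otimes f_i\}$ is a basis of $(\mathbb{C}^s_{\langle n\rangle})_k\otimes\mathbb{C}^n$; since the cyclic-gradient formula merely cyclically shifts the word and deletes one letter, $\delta^s|_{(\mathbb{C}^s_{\langle n\rangle})_{k+1}}$ and $\delta^l|_{(\mathbb{C}^l_{\langle n\rangle})_{k+1}}$ are represented by one and the same integer matrix in these bases, hence have equal rank.

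\emph{For $\dim\mathcal{X}^{(n)}_k$}: set $V_k:=\delta^l(\mathbb{C}^l_{\langle n\rangle})_{k+1}[1\oplus\cdots\oplus1]\subseteq[(\mathbb{C}^n)^{\otimes k}]^n$, so $\dim V_k=\big|\quotient{[n]^{k+1}}{\mathbb{Z}_{k+1}}\big|$ by the first part. Because $l_u[1\oplus\cdots\oplus1]=e_u$ is homogeneous of Fock degree $|u|-1$, the space $\delta^l\mathbb{C}^l_{\langle n\rangle}[1\oplus\cdots\oplus1]$ is the Fock-graded direct sum $\bigoplus_{k\ge0}V_k$, and by Theorem~\ref{range} it coincides with $\delta^s\mathbb{C}^s_{\langle n\rangle}[1\oplus\cdots\oplus1]$. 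On the other hand $\mathrm{Vect}(\mathbb{C}^s_{\langle n\rangle})[1\oplus\cdots\oplus1]$ is the algebraic direct sum $\bigoplus_{k\ge0}[(\mathbb{C}^n)^{\otimes k}]^n$ (polynomials in $s_1,\dots,s_n$ applied to the vacuum span all tensor degrees, via the $U$-polynomial expansion), while $\mathrm{Vect}(\mathbb{C}^s_{\langle n\rangle}|\tau)=\mathrm{Vect}(\mathbb{C}^s_{\langle n\rangle})\ominus\delta^s\mathbb{C}^s_{\langle n\rangle}$ has $[1\oplus\cdots\oplus1]$-image $\bigoplus_{k\ge0}\mathcal{X}^{(n)}_k$ with $\mathcal{X}^{(n)}_k\subseteq[(\mathbb{C}^n)^{\otimes k}]^n$ by \cite[Theorem~7.5]{v02}. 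Comparing Fock-degree-$k$ components of the resulting orthogonal decomposition forces $\mathcal{X}^{(n)}_k=[(\mathbb{C}^n)^{\otimes k}]^n\ominus V_k$, so $\dim\mathcal{X}^{(n)}_k=n^{k+1}-\big|\quotient{[n]^{k+1}}{\mathbb{Z}_{k+1}}\big|$. (As a cross-check, $\mathcal{X}^{(n)}_k$ is the image of $(\theta^l)^*$ restricted to $(\mathbb{C}^n)^{\otimes k+1}$, and by Lemma~\ref{dinfrlem1} its kernel has the same dimension as $\ker\big((I-R)|_{(\mathbb{C}^n)^{\otimes k+1}}\big)$, namely the number of $\mathbb{Z}_{k+1}$-orbits on $[n]^{k+1}$, which gives the same count.)

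\emph{For the filtered dimension}: under $[1\oplus\cdots\oplus1]$ a tuple of noncommutative polynomials in $s_1,\dots,s_n$ has degree $\le k$ exactly when its image lies in $\bigoplus_{j=0}^k[(\mathbb{C}^n)^{\otimes j}]^n$, since the passage from monomials to the products $U_{k_1}(s_{i_1})\cdots U_{k_p}(s_{i_p})$ is triangular with respect to degree. Hence $\mathrm{Vect}(\mathbb{C}^s_{\langle n\rangle}|\tau)_{\le k}[1\oplus\cdots\oplus1]=\bigoplus_{j=0}^k\mathcal{X}^{(n)}_j$, and summing the previous step together with $\sum_{j=0}^k n^{j+1}=\frac{n(n^{k+1}-1)}{n-1}$ yields the stated value. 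The one genuinely delicate point throughout is that $\delta^s$ is not homogeneous — applied to the vacuum it mixes Fock degrees — so the degree-by-degree orthogonal complement cannot be formed for $\delta^s$ directly; Theorem~\ref{range}, which trades $\delta^s\mathbb{C}^s_{\langle n\rangle}[1\oplus\cdots\oplus1]$ for the manifestly Fock-graded $\delta^l\mathbb{C}^l_{\langle n\rangle}[1\oplus\cdots\oplus1]=\bigoplus_kV_k$, is precisely what makes the argument work, and beyond that one merely has to keep the two gradings — word length in the $s_i$ versus Fock degree — carefully apart.
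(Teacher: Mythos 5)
Your proposal is correct and takes essentially the same route as the paper: Theorem \ref{thmbasis} gives $\dim\delta^{l}(\mathbb{C}^l_{\langle n\rangle})_{k+1}$, the degreewise orthogonal decomposition $[(\mathbb{C}^n)^{\otimes k}]^n=\mathcal{X}^{(n)}_{k}\oplus\delta^{l}(\mathbb{C}^l_{\langle n\rangle})_{k+1}[1\oplus\cdots\oplus1]$ obtained from Theorem \ref{range} together with the cited Theorem 7.5 gives $\dim\mathcal{X}^{(n)}_{k}$, and summing over degrees gives the filtered count. The details you add (the identical matrix representing $\delta^s$ and $\delta^l$ on monomial bases, and the degree-triangularity between monomials and Chebyshev products) are precisely the steps the paper's one-line proof leaves implicit.
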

\begin{proof}
    It is a direct consequence from Theorem \ref{thmbasis} with the facts that $\dim\left[(\mathbb{C}^n)^{\otimes k}\right]^n=n^{k+1}$ and that $[(\mathbb{C}^n)^{\otimes k}]^n=\mathcal{X}^{(n)}_{k}\oplus\left(\delta^{l}(\mathbb{C}^l_{\langle n\rangle})_{k+1}[1\oplus\cdots\oplus1]\right)$ by Theorem \ref{range}.
\end{proof}

\begin{Rem}
The number $\left|\quotient{[n]^{k}}{\mathbb{Z}_{k}}\right|$ is equal to the number of necklaces of length $k$ such that each bead is chosen from $n$ colors. From Burnside's lemma, we have
 \[ \left|\quotient{[n]^{k}}{\mathbb{Z}_{k}}\right| = \frac{1}{k}\sum_{g \in \mathbb{Z}_k} |([n]^k)^g|, \]
where $([n]^k)^g$ is the set of elements in $[n]^k$ which are fixed by $g$. Moreover, we have \[|([n]^k)^g|=n^{\mathrm{gcd}(g,k)},\]
where $\mathrm{gcd}(g,k)$ is the greatest common divisor of $g$ and $k$.
\end{Rem}

Thanks to the orthonormal basis in Theorem \ref{thmbasis}, we can compute the orthogonal projection onto the subspace of cyclic gradients. Therefore, we can obtain a concrete formula for the free Leray projection. 
\begin{Cor}
    For any 
    $\left[
    \begin{smallmatrix}
    e_{u_{1}}\\
    \vdots\\
    e_{u_{n}}
    \end{smallmatrix}
    \right]
    \in
    (\mathcal{F}(\mathbb{C}^n))^n
    $ with $u_{j}\in[n]^{k_{j}}$, we have
    \begin{equation*}
        P_{\delta^l(\mathbb{C}^l_{\langle n\rangle})[1\oplus\cdots\oplus1]}
        \left(
        \left[
    \begin{smallmatrix}
    e_{u_{1}}\\
    \vdots\\
    e_{u_{n}}
    \end{smallmatrix}
    \right]
        \right)
        =\sum_{1\leq j\leq n}\frac{\delta^{l}(l_{ju_j})[1\oplus\cdots \oplus 1]}{k_{j}+1}.
    \end{equation*}
\end{Cor}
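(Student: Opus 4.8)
\emph{Proof proposal.} The plan is to compute the projection against the orthonormal basis from Theorem~\ref{thmbasis}. Since $\delta^l(\mathbb{C}^l_{\langle n\rangle})_{d+1}[1\oplus\cdots\oplus1]\subset[(\mathbb{C}^n)^{\otimes d}]^n$ and these ambient summands are pairwise orthogonal inside $(\mathcal{F}(\mathbb{C}^n))^n$, the space $\delta^l(\mathbb{C}^l_{\langle n\rangle})[1\oplus\cdots\oplus1]$ is the orthogonal direct sum over $d\ge0$ of the $\delta^l(\mathbb{C}^l_{\langle n\rangle})_{d+1}[1\oplus\cdots\oplus1]$, so $\bigcup_{d\ge0}S_d$ is an orthonormal basis of it (the given vector $\eta$ need not be homogeneous, but this causes no trouble). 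Writing $\eta=\sum_{i=1}^{n}e_{u_i}\otimes f_i$ under $(\mathcal{F}(\mathbb{C}^n))^n\simeq\mathcal{F}(\mathbb{C}^n)\otimes\mathbb{C}^n$, we have
\[
P_{\delta^l(\mathbb{C}^l_{\langle n\rangle})[1\oplus\cdots\oplus1]}(\eta)=\sum_{[w]}\langle\eta,F([w])\rangle\,F([w]),
\]
the sum over all orbits $[w]\in[n]^{d+1}/\mathbb{Z}_{d+1}$, $d\ge0$, so everything reduces to the inner products $\langle\eta,\delta^l(l_w)[1\oplus\cdots\oplus1]\rangle$ for $w\in[n]^{d+1}$.

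First I would carry out this inner-product computation. Using the expansion $\delta^l(l_w)[1\oplus\cdots\oplus1]=\sum_{j=1}^{d+1}e_{v_j}\otimes f_{i_j}$ from the proof of Theorem~\ref{thmbasis} (where $w=i_1\cdots i_{d+1}$ and $v_j=i_{j+1}\cdots i_{d+1}i_1\cdots i_{j-1}$), orthonormality of $\{e_v\otimes f_i\}$ gives $\langle\eta,\delta^l(l_w)[1\oplus\cdots\oplus1]\rangle=\#\{j\in[d+1]\,:\,u_{i_j}=v_j\}$. Now $u_{i_j}=v_j$ holds exactly when $k_{i_j}=d$ and $i_ju_{i_j}$ equals the cyclic rotation $i_ji_{j+1}\cdots i_{j-1}$ of $w$. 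Grouping the indices $j$ by the value of this rotation, each of the $|[w]|$ distinct rotations of $w$ is produced by exactly $|(\mathbb{Z}_{d+1})_w|$ values of $j$, and whether the matching condition holds depends only on the rotation $x$ (namely on its first letter $h$ and on whether $hu_h=x$), not on $j$; moreover $x\mapsto h$ is a bijection from the set of such rotations onto $\{i\in[n]:k_i=d,\ iu_i\in[w]\}$. Hence
\[
\langle\eta,\delta^l(l_w)[1\oplus\cdots\oplus1]\rangle=|(\mathbb{Z}_{d+1})_w|\cdot\#\{i\in[n]\,:\,k_i=d,\ iu_i\in[w]\}.
\]

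Plugging this into the projection formula, using $F([w])=\delta^l(l_w)[1\oplus\cdots\oplus1]/(|(\mathbb{Z}_{d+1})_w|\sqrt{|[w]|})$ and the orbit–stabilizer identity $|(\mathbb{Z}_{d+1})_w|\cdot|[w]|=d+1$, the $[w]$-summand collapses to
\[
\langle\eta,F([w])\rangle F([w])=\frac{\#\{i\in[n]\,:\,k_i=d,\ iu_i\in[w]\}}{d+1}\,\delta^l(l_w)[1\oplus\cdots\oplus1].
\]
Finally I would reorganize the sum over orbits into a sum over $i\in[n]$: by the cyclic invariance $\delta^l(l_{Ru})=\delta^l(l_u)$ noted before Theorem~\ref{thmbasis}, the vector $\delta^l(l_w)[1\oplus\cdots\oplus1]$ depends only on $[w]$, and each $i\in[n]$ contributes to exactly one orbit, namely $[iu_i]$ with $d=k_i$; collecting terms yields $\sum_{j=1}^{n}\delta^l(l_{ju_j})[1\oplus\cdots\oplus1]/(k_j+1)$, as claimed.

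The step I expect to require the most care is the displayed inner-product identity: one must verify that, as $j$ ranges over $[d+1]$, the rotations of $w$ are swept with the uniform multiplicity $|(\mathbb{Z}_{d+1})_w|$, and that the condition ``$i_ju_{i_j}$ equals the $j$-th rotation of $w$'' is constant on each block of indices producing a fixed rotation. Once this multiplicity is isolated it cancels against the normalizing factor in $F([w])$, and the remaining manipulations are formal bookkeeping.
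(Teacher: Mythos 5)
Your proposal is correct and is essentially the paper's argument: expand against the orthonormal basis $S_d$ of Theorem~\ref{thmbasis} and cancel the normalization $|(\mathbb{Z}_{d+1})_w|\sqrt{|[w]|}$ via the orbit--stabilizer identity $|(\mathbb{Z}_{d+1})_w|\cdot|[w]|=d+1$. The paper only streamlines the bookkeeping by first reducing, by linearity, to a single summand $e_{u_i}\otimes f_i$, for which the sole contributing orbit is $[iu_i]$ with $\langle e_{u_i}\otimes f_i,F([iu_i])\rangle=1/\sqrt{|[iu_i]|}$, whereas you carry the whole vector through and regroup the orbit sum at the end.
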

\begin{proof}
    It suffices to confirm the desired identity for $e_{u_{i}}\otimes f_{i} \in (\mathbb{C}^n)^{\otimes k} \otimes \mathbb{C}^n$ with $u_i \in [n]^k$ and $i \in [n]$ (recall that $\{f_i\}_{i=1}^n$ denotes the standard basis of $\mathbb{C}^n$). Since $S_{k}=\{F([w])\ | \ w \in \quotient{[n]^{k+1}}{\mathbb{Z}_{k+1}}\}$ in Theorem \ref{thmbasis} is an orthonormal basis of $\delta^{l}(\mathbb{C}^l_{\langle n\rangle})_{k+1}[1\oplus\cdots\oplus1]$, $\left\langle
    e_{u_{i}}\otimes f_{i}
    ,
    F([w])\right\rangle
    \neq 0$ implies $[w]=[iu_i]$, and $\left\langle
    e_{u_{i}}\otimes f_{i}
    ,
    F([iu_i])\right\rangle=\frac{1}{\sqrt{|[iu_i]|}} $, we have   
    \begin{align*}
        P_{\delta^l(\mathbb{C}^l_{\langle n\rangle})_{k+1}[1\oplus\cdots\oplus1]}
        \left(
        e_{u_{i}}\otimes f_{i}
        \right)
        &=
        \sum_{[w]\in  \quotient{[n]^{k+1}}{\mathbb{Z}_{k+1}}}
        \left\langle
        e_{u_{i}}\otimes f_{i}
        ,
        F([w])
        \right\rangle
        F([w])\\
        &=
        \left\langle
        e_{u_{i}}\otimes f_{i}
        ,
        F([iu_{i}])
        \right\rangle
        F([iu_{i}])\\
        &=
        \frac{1}{\sqrt{|[iu_i]|}} \cdot 
        \frac{\delta^l(l_{iu_{i}})[1\oplus\cdots\oplus1]}{|(\mathbb{Z}_{k+1})_{iu_i}|\sqrt{|[iu_i]|}}\\
        &=
        \frac{\delta^l(l_{iu_{i}})[1\oplus\cdots\oplus1]}{k+1},
    \end{align*}
    where we use the well-known identity of the group action $|[iu_i]| \cdot | (\mathbb{Z}_{k+1})_{iu_i}|=|\mathbb{Z}_{k+1}|=k+1$.
\end{proof}

\begin{Cor}
    For any 
    $\left[
    \begin{smallmatrix}
    e_{u_{1}}\\
    \vdots\\
    e_{u_{n}}
    \end{smallmatrix}
    \right]
    \in
    (\mathcal{F}(\mathbb{C}^n))^n$ with $u_{j}\in[n]^{k_{j}}$,
    we have
    \[
    \Pi
    \left(
    \left[
    \begin{smallmatrix}
    e_{u_{1}}\\
    \vdots\\
    e_{u_{n}}
    \end{smallmatrix}
    \right]
    \right)
    =
    \sum_{1\leq j\leq n}
    \left(
    e_{u_{j}}\otimes f_{j}-\frac{\delta^{l}(l_{ju_{j}})[1\oplus \cdots \oplus 1]}{k_{j}+1}
    \right),
    \]
    where $\Pi$ is the free Leray projection (on the full Fock space side) (see \cite[section 3]{v19}).
\end{Cor}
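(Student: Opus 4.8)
The plan is to obtain the formula for $\Pi$ as an immediate consequence of the previous corollary, by identifying $\Pi$ with the projection complementary to the one onto cyclic gradients. First I would recall that the free Leray projection $\Pi$ on the full Fock space side is, by definition (\cite[section 3]{v19}), the orthogonal projection of $(\mathcal{F}(\mathbb{C}^n))^n$ onto $\mathrm{Vect}(\mathbb{C}^s_{\langle n\rangle}|\tau)[1\oplus\cdots\oplus1]$. Using the identity $\mathrm{Vect}(\mathbb{C}^s_{\langle n\rangle}|\tau)=\mathrm{Vect}(\mathbb{C}^s_{\langle n\rangle})\ominus\delta^s\mathbb{C}^s_{\langle n\rangle}$, the Fock-space description of this space in \cite[Theorem 7.5]{v02}, and Theorem \ref{range} (which lets us replace $\delta^s$ by $\delta^l$), I would note that $\mathrm{Vect}(\mathbb{C}^s_{\langle n\rangle}|\tau)[1\oplus\cdots\oplus1]$ is exactly the orthogonal complement of $\overline{\delta^l(\mathbb{C}^l_{\langle n\rangle})[1\oplus\cdots\oplus1]}$ in $(\mathcal{F}(\mathbb{C}^n))^n$. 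Hence $\Pi = I - P_{\delta^l(\mathbb{C}^l_{\langle n\rangle})[1\oplus\cdots\oplus1]}$.

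Then I would simply apply the previous corollary to the vector $\left[\begin{smallmatrix}e_{u_1}\\\vdots\\e_{u_n}\end{smallmatrix}\right]=\sum_{1\le j\le n}e_{u_j}\otimes f_j$, which gives $P_{\delta^l(\mathbb{C}^l_{\langle n\rangle})[1\oplus\cdots\oplus1]}\!\left(\left[\begin{smallmatrix}e_{u_1}\\\vdots\\e_{u_n}\end{smallmatrix}\right]\right)=\sum_{1\le j\le n}\frac{\delta^l(l_{ju_j})[1\oplus\cdots\oplus1]}{k_j+1}$. Subtracting this from $\left[\begin{smallmatrix}e_{u_1}\\\vdots\\e_{u_n}\end{smallmatrix}\right]$ and regrouping the $n$ terms yields the claimed formula. (Implicitly one uses that $\delta^l$ preserves the polynomial degree, so the cyclic-gradient space on the Fock side is the orthogonal direct sum of its homogeneous pieces; but since the input vector involves only finitely many degrees this is already built into the statement of the previous corollary, and no convergence issue arises.)

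There is essentially no serious obstacle: all of the content sits in Theorem \ref{range}, \cite[Theorem 7.5]{v02}, Theorem \ref{thmbasis}, and the previous corollary. The only step that deserves a sentence of justification is the first one — that the Fock-side free Leray projection is the complementary projection to the cyclic-gradient projection — which is just a matter of chasing the unitary identifications set up in Section 2 and invoking Theorem \ref{range} to pass from $\delta^s$ to $\delta^l$.
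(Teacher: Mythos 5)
Your argument is correct and is exactly the (implicit) reasoning of the paper, which states this corollary without proof as an immediate consequence of the preceding one: $\Pi = I - P_{\delta^l(\mathbb{C}^l_{\langle n\rangle})[1\oplus\cdots\oplus1]}$ by Theorem \ref{range} and \cite[Theorem 7.5]{v02}, and then one substitutes the projection formula just proved. Nothing further is needed.
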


\begin{Rem}
We can also describe a (non-orthogonal) basis of the homogeneous free divergence-free vector field $\mathcal{X}^{(n)}_{k}$ on the full Fock space. Indeed, Lemma \ref{dinfrlem1} tells us
\[
(\theta^l)^*
:
\mathrm{ran}\left((I-R)|_{(\mathbb{C}^n)^{\otimes k+1}}\right)
\to
\mathrm{ran}\left((\theta^l)^*|_{(\mathbb{C}^n)^{\otimes k+1}}\right)
\]
is a linear isomorphism. Here, note that $\mathrm{ran}\left((\theta^l)^*|_{(\mathbb{C}^n)^{\otimes k+1}}\right)=\mathcal{X}^{(n)}_{k}$, and hence
\[
\dim
\left(
\mathrm{ran}\left((I-R)|_{(\mathbb{C}^n)^{\otimes k+1}}\right)
\right)
=
\dim\left(\mathcal{X}^{(n)}_{k}\right)
=
n^{k+1}-
\left|
\quotient{[n]^{k+1}}{\mathbb{Z}_{k+1}}
\right|.
\]
Thus, in order to obtain a basis of $\mathcal{X}^{(n)}_{k}$, it suffices to find a basis of $\mathrm{ran}\left((I-R)|_{(\mathbb{C}^n)^{\otimes k+1}}\right)$. In \cite{v02}, Voiculescu introduced a basis of $\mathcal{X}^{(n)}_{k}$ by $\{(\theta^l)^*(I-R)e_w \ | \ w \in \Omega_{k+1}\}$ where $\Omega_{k+1}=\{w\in [n]^{k+1} \ | \ w \prec R w, w \neq Rw\}$ and $\prec$ is the lexicographic order. However, since we have the decomposition $[n]^{k+1}=\{Rw=w\} \sqcup \Omega_{k+1} \sqcup \{w\succ Rw, w\neq Rw\}$ with $|\{Rw=w\}|=n$ and $|\Omega_{k+1}| = |\{w\succ Rw, w\neq Rw\}|$, the cardinality of $\Omega_{k+1}$ is $ \frac{1}{2}(n^{k+1}-n)$, which is smaller than our dimension $n^{k+1}-\left|\quotient{[n]^{k+1}}{\mathbb{Z}_{k+1}}\right|$. In fact, we can modify the set $\Omega_{k+1}$ and take a basis of $\mathrm{ran}\left((I-R)|_{(\mathbb{C}^n)^{\otimes k+1}}\right)$ by considering the action of $\mathbb{Z}_{k+1}$. 
\end{Rem}
\begin{Prop}\label{lembasisdivfree}
    The set $B=\bigsqcup_{[u]\in\quotient{[n]^{k+1}}{\mathbb{Z}_{k+1}}} B_{[u]}$, where
    \[
    B_{[u]}
    =
    \left\{
    (I-R)e_{v}=e_{v}-e_{Rv}\,\middle|\,v\in[u]\setminus\{u\}
    \right\},
    \]
    is a basis of $\mathrm{ran}\left((I-R)|_{(\mathbb{C}^n)^{\otimes k+1}}\right)$, and thus, 
     the set $\widetilde{B}=\bigsqcup_{[u]\in\quotient{[n]^{k+1}}{\mathbb{Z}_{k+1}}}\widetilde{B}_{[u]}$, where
    \[
    \widetilde{B}_{[u]}
    =
    \left\{
    \left(\delta_{j,i_{1}}e_{i_{2}\cdots i_{k+1}}-2\delta_{j,i_{k+1}}e_{i_{1}\cdots i_{k}}+ \delta_{j,i_k} e_{i_{k+1}i_1\cdots i_{k-1}}\right)_{1\leq j\leq n}
    \,\middle|\,
    i_{1}i_{2}\cdots i_{k+1}\in[u]\setminus\{u\}
    \right\},
    \]
    is a basis of $\mathcal{X}^{(n)}_{k}$.
\end{Prop}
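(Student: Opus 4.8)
The plan is to split the first assertion into an orbit‑by‑orbit statement, settle each piece by an elementary kernel/dimension count, and then transport everything to $\mathcal{X}^{(n)}_{k}$ via the isomorphism $(\theta^l)^*$ recorded in the remark preceding the statement.

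First I would note that $R$ respects the orbit decomposition of the standard basis. Setting $V_{[u]}=\mathrm{span}\{e_v\mid v\in[u]\}$ for each $[u]\in\quotient{[n]^{k+1}}{\mathbb{Z}_{k+1}}$, the fact that $Re_v=e_{Rv}$ with $Rv\in[u]$ gives an $R$‑invariant direct sum $(\mathbb{C}^n)^{\otimes k+1}=\bigoplus_{[u]}V_{[u]}$, hence $\mathrm{ran}\bigl((I-R)|_{(\mathbb{C}^n)^{\otimes k+1}}\bigr)=\bigoplus_{[u]}\mathrm{ran}\bigl((I-R)|_{V_{[u]}}\bigr)$. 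Since $B_{[u]}\subseteq V_{[u]}$ and a disjoint union of bases of the summands is a basis of the direct sum, it suffices to prove that each $B_{[u]}$ is a basis of $\mathrm{ran}\bigl((I-R)|_{V_{[u]}}\bigr)$.

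Now fix an orbit of size $d=|[u]|$. A vector $\sum_{v\in[u]}c_v e_v$ lies in $\ker\bigl((I-R)|_{V_{[u]}}\bigr)$ exactly when $c_v=c_{Rv}$ for all $v\in[u]$, i.e.\ when the coefficient function is constant on the orbit; so this kernel is one‑dimensional and $\dim\mathrm{ran}\bigl((I-R)|_{V_{[u]}}\bigr)=d-1$. The $d$ vectors $\{(I-R)e_v=e_v-e_{Rv}\mid v\in[u]\}$ clearly span this range, and they satisfy the single telescoping relation $\sum_{v\in[u]}(e_v-e_{Rv})=0$ (because $R$ permutes $[u]$); hence deleting the term $v=u$ leaves the $d-1$ vectors of $B_{[u]}$, which still span the $(d-1)$‑dimensional space $\mathrm{ran}\bigl((I-R)|_{V_{[u]}}\bigr)$ and therefore form a basis of it. (When $d=1$, i.e.\ $u$ is a constant word, $B_{[u]}=\emptyset$ and the range is $\{0\}$, so there is nothing to prove.) Assembling over all orbits shows that $B$ is a basis of $\mathrm{ran}\bigl((I-R)|_{(\mathbb{C}^n)^{\otimes k+1}}\bigr)$. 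The only step here requiring a bit of care is this linear‑independence claim — equivalently, that the telescoping identity is the \emph{only} relation among the vectors $(I-R)e_v$, $v\in[u]$ — and it is handled cleanly by the dimension count rather than by an explicit comparison of coefficients.

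For the second assertion, the remark above (a consequence of Lemma \ref{dinfrlem1}) gives that $(\theta^l)^*\colon\mathrm{ran}\bigl((I-R)|_{(\mathbb{C}^n)^{\otimes k+1}}\bigr)\to\mathcal{X}^{(n)}_{k}$ is a linear isomorphism, so $\widetilde{B}=(\theta^l)^*(B)$ is automatically a basis of $\mathcal{X}^{(n)}_{k}$; it remains only to identify the image of a generator. For $v=i_1i_2\cdots i_{k+1}$ we have $Rv=i_{k+1}i_1\cdots i_k$, and using $(\theta^l)^*\eta=\bigl((l_j^*-r_j^*)\eta\bigr)_{j=1}^n$ together with $l_j^*e_{w_1\cdots w_m}=\delta_{j,w_1}e_{w_2\cdots w_m}$ and $r_j^*e_{w_1\cdots w_m}=\delta_{j,w_m}e_{w_1\cdots w_{m-1}}$, a direct expansion yields
\[
(l_j^*-r_j^*)(e_v-e_{Rv})=\delta_{j,i_1}e_{i_2\cdots i_{k+1}}-2\delta_{j,i_{k+1}}e_{i_1\cdots i_k}+\delta_{j,i_k}e_{i_{k+1}i_1\cdots i_{k-1}},
\]
so that $(\theta^l)^*$ carries the element of $B_{[u]}$ indexed by $v$ precisely to the element of $\widetilde{B}_{[u]}$ indexed by $v$; hence $\widetilde{B}$ is a basis of $\mathcal{X}^{(n)}_{k}$.
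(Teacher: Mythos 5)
Your argument is correct, and its core differs from the paper's in a way worth noting. For the first assertion the paper takes the dimension of $\mathrm{ran}\left((I-R)|_{(\mathbb{C}^n)^{\otimes k+1}}\right)$ as known from the preceding remark (which rests on Lemma \ref{dinfrlem1} together with Corollary \ref{dimcygra}), observes that $|B|$ equals this dimension, and then verifies linear independence directly: within a fixed orbit it assumes a vanishing combination $\sum_{v\in[u]\setminus\{u\}}\alpha(v)(I-R)e_v=0$, telescopes the coefficients, and reads off $\alpha\equiv 0$ from the linear independence of the $e_w$. You instead restrict $I-R$ to each $R$-invariant orbit subspace $V_{[u]}$, identify its kernel with the constants on the orbit, conclude $\dim\mathrm{ran}\left((I-R)|_{V_{[u]}}\right)=|[u]|-1$, and get that $B_{[u]}$ is a basis by spanning plus this dimension count (using the single telescoping relation to drop the vector indexed by $u$). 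The two strategies are dual: the paper proves independence and imports the dimension, you prove spanning and re-derive the dimension orbit by orbit, so your route is more self-contained (it does not need the global dimension formula and in fact recovers $\dim\mathrm{ran}(I-R)=n^{k+1}-\bigl|\quotient{[n]^{k+1}}{\mathbb{Z}_{k+1}}\bigr|$ as a by-product), at the cost of a slightly more structural setup. For the second assertion both proofs invoke the same isomorphism $(\theta^l)^*$ on $\mathrm{ran}(I-R)$; your explicit computation of $(l_j^*-r_j^*)(e_v-e_{Rv})$, which the paper leaves implicit, is a welcome addition since it is exactly what justifies the displayed form of $\widetilde{B}_{[u]}$.
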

\begin{proof}
    Since 
    $\left|B\right|
    =
    n^{k+1}-\left|\quotient{[n]^{k+1}}{\mathbb{Z}_{k+1}}\right|
    =
    \dim
    \left(
    \mathrm{ran}\left((I-R)|_{(\mathbb{C}^n)^{\otimes k+1}}\right)
    \right)$, we have to confirm only that all elements of $B$ are linearly independent. Remark that if $[u]\not=[v]$ in $\quotient{[n]^{k+1}}{\mathbb{Z}_{k+1}}$, then $B_{[u]}$ and $B_{[v]}$ are orthogonal to each other. Hence, it suffices to show that all the elements of $B_{[u]}$ are linearly independent of each other for each $[u]\in\quotient{[n]^{k+1}}{\mathbb{Z}_{k+1}}$. 
    
    Choose an arbitrary $[u]\in\quotient{[n]^{k+1}}{\mathbb{Z}_{k+1}}$. Assume that $\sum_{v\in[u]\setminus\{u\}}\alpha(v)\cdot (I-R)e_{v}=0$ in $(\mathbb{C}^n)^{\otimes k+1}$ with $\alpha(v)\in\mathbb{C}$. Let us write 
    \[
    [u]\setminus\{u\}
    =
    \{
    v,Rv,R^2v,\dots,R^pv
    \}\subset(\mathbb{C}^n)^{\otimes k+1}\quad(p=[u]-2\mbox{ and }R^{p+1}v=u).
    \]
    Remark that $R^iv\not= R^jv$ for any $0\leq i\not=j\leq p$.
    Then, we observe that
    \begin{align*}
    \alpha(v)e_{v}
    +
    \sum_{1\leq j\leq p}(\alpha(R^jv)-\alpha(R^{j-1}v))\cdot e_{R^jv}
    -
    \alpha(R^pv)u=0.
    \end{align*}
    By the linear independence of $\{e_{u}\}_{u\in[n]^*}$, we have $\alpha(R^j)=0$ for all $0\leq j\leq p$. Applying $(\theta^l)^*=(l_j^*-r_j^*)_{1\le j\le n}$ to the basis $B$, we obtain $\tilde{B}$ as a basis of $\mathcal{X}^{(n)}_{k}$. 
\end{proof}
\begin{Rem}
In \cite{v02}, Voiculescu introduced another basis of $\mathcal{X}^{(n)}_{k}$.
For $I=i_0\cdots i_k \in [n]^{k+1}$, let $\mathrm{per}(I)$ be the period of $I$, i.e. the least $m \in \{1,\ldots,k+1\}$ such that $i_s=i_t$ whenever $s\equiv t  \ (\mathrm{mod} \ m)$. Let $\rho(\mathrm{per}(I))$ be the set of the non-unital roots of $\zeta^{\mathrm{per}(I)}= 1$ (we set $\rho(1)=\emptyset$).
Then, Voiculescu introduced the following set for the basis of $\mathcal{X}^{(n)}_{k}$
\[ \left\{\sum_{k=1}^{\mathrm{per}(I)-1}\zeta^j F_{i_ji_{j+1}\cdots i_k i_0 \cdots i_{j-1}}\ | \ I=i_0\cdots i_k \in \omega(k+1), \zeta \in \rho(\mathrm{per}(I))  \right\}, \]
where $F_w=(\theta^l)^*(I-R)e_w$ for $w \in [n]^*$ and $\omega(k+1)$ is defined by
\[\left\{I=i_0\cdots i_k \in [n]^{k+1}\ | \ i_0\cdots i_k \prec i_ji_{j+1}\cdots i_k i_0 \cdots i_{j-1}, \ j=1,\ldots,k \right\}.\]
Note that the element $I$ in $\omega(k+1)$ is the minimal element in the orbit of $I$ with respect to the lexicographic order, and thus there is a bijection between $\omega(k+1)$ and $\quotient{[n]^{k+1}}{\mathbb{Z}_{k+1}}$. Since $|\rho(\mathrm{per}(I))|=\mathrm{per}(I)-1=|[I]|-1$, the number of $(I,\zeta)$ such that $I\in \omega(k+1)$ and $ \zeta \in \rho(\mathrm{per}(I))$ is equal to $n^{k+1}-\left|\quotient{[n]^{k+1}}{\mathbb{Z}_{k+1}} \right|$, which coincides with our dimension. 
\end{Rem}
}

\end{document}